\newcommand{\qdn}{\hspace*{-1.5mm}}
\newcommand{\qqdn}{\hspace*{-2.5mm}}
\newcommand{\xqdn}{\hspace*{-5.0mm}}
\newcommand{\xxqdn}{\hspace*{-10mm}}
\newcommand{\fns}{\footnotesize}
\newcommand{\sst}{\scriptstyle}
\newcommand{\fnk}[3]{\left[\qdn\ba{#1}#2\\#3\ea\qdn\right]}
\newcommand{\be}{\begin{equation}}
\newcommand{\ee}{\end{equation}}
\newcommand{\ba}{\begin{array}}
\newcommand{\ea}{\end{array}}
\newcommand{\bmn}{\begin{eqnarray}}
\newcommand{\emn}{\end{eqnarray}}
\newcommand{\bnm}{\begin{eqnarray*}}
\newcommand{\enm}{\end{eqnarray*}}
\newcommand{\bln}{\begin{subequations}}
\newcommand{\eln}{\end{subequations}}
\newtheorem{thm}{Theorem}
\newtheorem{corl}[thm]{Corollary}
\newtheorem{exam}{Example}
\newtheorem{entry}{Entry}
\newcommand{\bbtm}[4]{\bibitem{kn:#1}{#2,}~{#3,}~{#4.}}
\newcommand{\cito}[1]{\cite{kn:#1}}
\begin{document} 
{\fns
\title{Series expansions for $1/\pi^m$ and $\pi^m$}
\author{$^a$Chuanan Wei, $^b$Xiaoxia Wang}
\dedicatory{
$^A$Department of Information Technology\\
  Hainan Medical College, Haikou 571199, China\\
  $^B$Department of Mathematics\\
  Shanghai University, Shanghai 200444, China}
\thanks{\emph{Email addresses}:
      weichuanan78@163.com (C. Wei), xwang913 @126.com (X. Wang)}

\address{ }
\footnote{\emph{2010 Mathematics Subject Classification}: Primary
65B10 and Secondary 40A15.}

\keywords{The Telescoping method; Series expansion for $1/\pi^m$,
Series expansion for $\pi^m$}

\begin{abstract}
By means of the telescoping method, we establish two summation
formulas on sine function. As the special cases of them, several
interesting series expansions for $1/\pi^m$ and $\pi^m$ are given.
\end{abstract}

\maketitle\thispagestyle{empty}
\markboth{Chuanan Wei, Xiaoxia Wang}
         {Series expansions for $1/\pi^m$ and $\pi^m$}

\section{Introduction}

For centuries, the study of $\pi$ attracts many mathematicians. A
lot of interesting series expansions for $1/\pi^2$, $1/\pi$, $\pi$
and $\pi^2$ can be seen in the papers
\cito{adanmchik}-\cito{zudilin-c}. Inspired by these work, we shall
explore series expansions for $1/\pi^m$ and $\pi^m$ in terms of the
telescoping method.

 For a complex number $x$ and an integer $n$, define the shifted
factorial by
 \bnm
 (x)_n=
\begin{cases}
\prod_{k=0}^{n-1}(x+k),&\quad\text{when}\quad n>0;\\
1,&\quad\text{when}\quad n=0;\\
 \frac{(-1)^n}{\prod_{k=1}^{-n}(k-x)},&\quad\text{when}\quad n<0.
\end{cases}
 \enm
Recall that the function $\Gamma(x)$ can be defined by Euler's
integral:
\[\Gamma(x)=\int_{0}^{\infty}t^{x-1}e^{-t}dt\:\:\text{with}\:\:Re(x)>0.\]
Then we have the following three relations:
 \bnm
 &&\Gamma(x+n)=\Gamma(x)(x)_n,                   \\
 &&\Gamma(x)\Gamma(1-x)=\frac{\pi}{\sin(\pi x)}, \\
&&\lim_{n\to\infty}\frac{\Gamma(n+x)}{\Gamma(n+y)}n^{y-x}=1,
  \enm
which will frequently be utilized without indication in this paper.
For simplifying the expressions, we shall use the two notations:
 \bnm
&&\xqdn\quad\!\fnk{cccc}{a,&b,&\cdots,&c}{\alpha,&\beta,&\cdots,&\gamma}_n
=\frac{(a)_n(b)_n\cdots(c)_n}{(\alpha)_n(\beta)_n\cdots (\gamma)_n},\\
&&\xqdn\Gamma\fnk{cccc}{a,&b,&\cdots,&c}{\alpha,&\beta,&\cdots,&\gamma}\:\:\,
=\frac{\Gamma(a)\Gamma(b)\cdots\Gamma(c)}{\Gamma(\alpha)\Gamma(\beta)\cdots
\Gamma(\gamma)}.
  \enm

For a complex sequence $\{\tau_k\}_{k\in \mathbb{Z}}$, define
 the difference operator by
\[\nabla\tau_k=\tau_k-\tau_{k-1}.\]
Then the telescoping method can offer the summation:
 \bmn\label{telescoping-summation}
\:\qquad\sum_{k=0}^{n}\nabla\tau_k=\tau_n-\tau_{-1}.
 \emn

The structure of the paper is arranged as follows. We shall found a
summation formula on sine function which includes several series
expansions for $1/\pi^m$ in section 2. And a summation formula on
sine function which implies several series expansions for $\pi^m$
will be derived in section 3.

\section{Series expansions for $1/\pi^m$}

\begin{thm}\label{thm-a} For $m$ complex numbers $\{x_i\}_{i=1}^m$ and $3m$ integers $\{p_i,q_i,r_i\}_{i=1}^m$
with $\min\{r_i, p_i+q_i-r_i+1\}\geq0$, there holds:
 \bnm
 \quad\frac{\prod_{i=1}^m\sin(\pi x_i)}{\pi^m}&&\xqdn=\:
  \sum_{k=0}^{\infty}\prod_{i=1}^m\frac{(x_i)_{k+p_i}(1-x_i)_{k+q_i}}{(k+r_i)!(k+p_i+q_i-r_i+1)!}\\
  &&\xqdn\times\:\Big\{\prod_{i=1}^m(k+x_i+p_i)(k-x_i+q_i+1)-\prod_{i=1}^m(k+r_i)(k+p_i+q_i-r_i+1)\Big\}\\
&&\xqdn+\:\prod_{i=1}^m\frac{(x_i)_{p_i}(1-x_i)_{q_i}}{r_i!(p_i+q_i-r_i+1)!}\,r_i(p_i+q_i-r_i+1).
 \enm
\end{thm}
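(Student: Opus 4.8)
The plan is to exhibit the entire right-hand side as a backward-telescoping sum. For each integer $k\ge-1$ set
\[
\tau_k=\prod_{i=1}^m\frac{(x_i)_{k+p_i+1}\,(1-x_i)_{k+q_i+1}}{(k+r_i)!\,(k+p_i+q_i-r_i+1)!},
\]
where a factorial of a negative integer is read through the poles of $\Gamma$ (so $1/(-1)!=0$); the hypothesis $\min\{r_i,\,p_i+q_i-r_i+1\}\ge0$ guarantees that for every $k\ge0$ all the factorials occurring have non-negative argument, hence each $\tau_k$ with $k\ge0$ is finite. First I would compute $\nabla\tau_k=\tau_k-\tau_{k-1}$. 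Applying $(x)_{k+p+1}=(x)_{k+p}\,(x+k+p)$ to peel one factor off each rising factorial in $\tau_k$, and $(k+r)!=(k+r)\,(k+r-1)!$ to peel one factor off the shorter factorials in $\tau_{k-1}$, both $\tau_k$ and $\tau_{k-1}$ acquire the common prefactor
\[
P_k:=\prod_{i=1}^m\frac{(x_i)_{k+p_i}\,(1-x_i)_{k+q_i}}{(k+r_i)!\,(k+p_i+q_i-r_i+1)!},
\]
so that $\tau_k=P_k\prod_{i=1}^m(k+x_i+p_i)(k-x_i+q_i+1)$ while $\tau_{k-1}=P_k\prod_{i=1}^m(k+r_i)(k+p_i+q_i-r_i+1)$. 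Subtracting, $\nabla\tau_k$ is exactly the general term of the series in the theorem, and then \eqref{telescoping-summation} gives $\sum_{k=0}^{n}\nabla\tau_k=\tau_n-\tau_{-1}$ for every $n\ge0$.

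Next I would evaluate the two boundary pieces. Setting $k=-1$ gives $\tau_{-1}=\prod_{i=1}^m\frac{(x_i)_{p_i}(1-x_i)_{q_i}}{(r_i-1)!\,(p_i+q_i-r_i)!}$, and rewriting $1/(r_i-1)!=r_i/r_i!$ together with $1/(p_i+q_i-r_i)!=(p_i+q_i-r_i+1)/(p_i+q_i-r_i+1)!$ shows $\tau_{-1}$ equals the last, non-summation, line of the asserted identity (both sides vanishing when some $r_i=0$). For $\tau_n$, writing $(a)_j=\Gamma(a+j)/\Gamma(a)$ turns it into
\[
\tau_n=\prod_{i=1}^m\frac1{\Gamma(x_i)\Gamma(1-x_i)}\cdot\frac{\Gamma(n+x_i+p_i+1)\,\Gamma(n+q_i-x_i+2)}{\Gamma(n+r_i+1)\,\Gamma(n+p_i+q_i-r_i+2)};
\]
since $(x_i+p_i+1)+(q_i-x_i+2)=(r_i+1)+(p_i+q_i-r_i+2)$, the quoted asymptotic $\lim_{n\to\infty}\Gamma(n+x)\Gamma(n+y)^{-1}n^{y-x}=1$ forces each of the $m$ $\Gamma$-quotient factors to tend to $1$, and Euler's reflection formula gives $\lim_{n\to\infty}\tau_n=\prod_{i=1}^m\frac1{\Gamma(x_i)\Gamma(1-x_i)}=\prod_{i=1}^m\frac{\sin(\pi x_i)}{\pi}$. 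Letting $n\to\infty$ in the telescoping identity (the right-hand side converges, so the series on the left converges as well) yields $\sum_{k=0}^{\infty}\nabla\tau_k=\dfrac{\prod_{i=1}^m\sin(\pi x_i)}{\pi^m}-\tau_{-1}$, which is precisely the claim once $\tau_{-1}$ is transposed to the other side.

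The only step that needs genuine care is the algebraic verification that $\nabla\tau_k$ equals the displayed general term; it is bookkeeping with the shifted-factorial recurrences rather than a real obstacle, provided one keeps an eye on the degenerate values $r_i=0$ and $p_i+q_i-r_i=-1$, where the affected products vanish on both sides. In fact, once the certificate $\tau_k$ above is written down the argument is entirely mechanical — the true content of the proof is discovering $\tau_k$, i.e. recognizing that $P_k$ multiplied by $\prod(k+x_i+p_i)(k-x_i+q_i+1)\big/\prod(k+r_i)(k+p_i+q_i-r_i+1)$, after simplification, is a perfect backward difference.
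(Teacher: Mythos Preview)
Your proof is correct and is essentially identical to the paper's own argument: the paper chooses the same certificate $\tau_k=\prod_{i=1}^m\frac{(x_i)_{k+p_i+1}(1-x_i)_{k+q_i+1}}{(k+r_i)!\,(k+p_i+q_i-r_i+1)!}$, computes $\nabla\tau_k$ exactly as you do, telescopes, and evaluates $\lim_{n\to\infty}\tau_n$ via the same $\Gamma$-asymptotics and Euler reflection. Your explicit discussion of $\tau_{-1}$ and of the degenerate cases $r_i=0$, $p_i+q_i-r_i+1=0$ is in fact slightly more careful than the paper's presentation.
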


\begin{proof}
Letting
\[\tau_k=\prod_{i=1}^m\frac{(x_i)_{k+p_i+1}(1-x_i)_{k+q_i+1}}{(k+r_i)!(k+p_i+q_i-r_i+1)!},\]
Then we have
 \bnm
\nabla\tau_k&&\xqdn=\:\prod_{i=1}^m\frac{(x_i)_{k+p_i+1}(1-x_i)_{k+q_i+1}}{(k+r_i)!(k+p_i+q_i-r_i+1)!}
-\prod_{i=1}^m\frac{(x_i)_{k+p_i}(1-x_i)_{k+q_i}}{(k+r_i-1)!(k+p_i+q_i-r_i)!}\\
 &&\xqdn=\:\prod_{i=1}^m\frac{(x_i)_{k+p_i}(1-x_i)_{k+q_i}}{(k+r_i)!(k+p_i+q_i-r_i+1)!}\\
 &&\xqdn\times\:\Big\{\prod_{i=1}^m(k+x_i+p_i)(k-x_i+q_i+1)-\prod_{i=1}^m(k+r_i)(k+p_i+q_i-r_i+1)\Big\}.
 \enm
Substituting the expressions of $\tau_k$ and $\nabla\tau_k$ into
\eqref{telescoping-summation}, we obtain the terminating summation
formula:
 \bnm
 &&\sum_{k=0}^{n}\prod_{i=1}^m\frac{(x_i)_{k+p_i}(1-x_i)_{k+q_i}}{(k+r_i)!(k+p_i+q_i-r_i+1)!}\\
 &&\:\times\:\Big\{\prod_{i=1}^m(k+x_i+p_i)(k-x_i+q_i+1)-\prod_{i=1}^m(k+r_i)(k+p_i+q_i-r_i+1)\Big\}\\
 &&\:=\:\prod_{i=1}^m\frac{(x_i)_{n+p_i+1}(1-x_i)_{n+q_i+1}}{(n+r_i)!(n+p_i+q_i-r_i+1)!}\\
 &&\:-\:\prod_{i=1}^m\frac{(x_i)_{p_i}(1-x_i)_{q_i}}{r_i!(p_i+q_i-r_i+1)!}\,r_i(p_i+q_i-r_i+1).
 \enm
The case $n\to\infty$ of it reads as
  \bnm
 &&\sum_{k=0}^{\infty}\prod_{i=1}^m\frac{(x_i)_{k+p_i}(1-x_i)_{k+q_i}}{(k+r_i)!(k+p_i+q_i-r_i+1)!}\\
 &&\:\times\:\Big\{\prod_{i=1}^m(k+x_i+p_i)(k-x_i+q_i+1)-\prod_{i=1}^m(k+r_i)(k+p_i+q_i-r_i+1)\Big\}\\
 &&\:=\:\prod_{i=1}^m\lim_{n\to\infty}\frac{(x_i)_{n+p_i+1}(1-x_i)_{n+q_i+1}}{(n+r_i)!(n+p_i+q_i-r_i+1)!}\\
 &&\:-\:\prod_{i=1}^m\frac{(x_i)_{p_i}(1-x_i)_{q_i}}{r_i!(p_i+q_i-r_i+1)!}\,r_i(p_i+q_i-r_i+1).
 \enm
Considering that
 \bnm
 &&\qqdn\xqdn\lim_{n\to\infty}\frac{(x_i)_{n+p_i+1}(1-x_i)_{n+q_i+1}}{(n+r_i)!(n+p_i+q_i-r_i+1)!}\\
 &&\qqdn\xqdn\:\:\:=\:\:\frac{1}{\Gamma(x_i)\Gamma(1-x_i)}
 \lim_{n\to\infty}\Gamma\fnk{cccc}{n+x_i+p_i+1,n-x_i+q_i+2}{n+r_i+1,n+p_i+q_i-r_i+2}\\
 &&\qqdn\xqdn\:\:\:=\:\:\frac{\sin(\pi x_i)}{\pi}
 \lim_{n\to\infty}\Gamma\fnk{cccc}{n+x_i+p_i+1}{n+r_i+1}n^{r_i-p_i-x_i}\\
 &&\qqdn\xqdn\:\:\:\times\:\:\lim_{n\to\infty}\Gamma\fnk{cccc}{n-x_i+q_i+2}{n+p_i+q_i-r_i+2}n^{p_i-r_i+x_i}\\
&&\qqdn\xqdn\:\:\:=\:\:\frac{\sin(\pi x_i)}{\pi},
 \enm
we completes the proof of Theorem \ref{thm-a}.
\end{proof}

When $m=1$, Theorem \ref{thm-a} reduces to the simple summation
formula on sine function.

\begin{corl}\label{corl-a} For a complex number x and three integers $\{p,q,r\}$
with $\min\{r, p+q-r+1\}\geq0$, there holds:
 \bnm
 \quad\frac{\sin(\pi x)}{\pi(p-r+x)(1+q-r-x)}&&\xqdn=\sum_{k=0}^{\infty}\frac{(x)_{k+p}(1-x)_{k+q}}
 {(k+r)!(k+p+q-r+1)!}\\&&\xqdn+\frac{r(p+q-r+1)}{(p-r+x)(1+q-r-x)}\frac{(x)_p(1-x)_q}{r!(p+q-r+1)!}.
 \enm
\end{corl}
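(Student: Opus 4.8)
The plan is to obtain Corollary \ref{corl-a} simply as the $m=1$ instance of Theorem \ref{thm-a}, after one short algebraic simplification. Specializing the theorem to $m=1$, every product $\prod_{i=1}^m$ reduces to a single factor, and the curly-brace factor in the summand becomes
\[(k+x+p)(k-x+q+1)-(k+r)(k+p+q-r+1).\]

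The one thing that needs to be checked — and it is really the only computation — is that this expression, though it looks quadratic in $k$, is actually independent of $k$. Expanding both products, the $k^2$ terms cancel and the coefficients of $k$ agree (both equal $p+q+1$), so the difference collapses to the constant
\[(x+p)(q+1-x)-r(p+q-r+1),\]
and one further expansion shows this equals $(p-r+x)(1+q-r-x)$. Hence, in the $m=1$ identity, the curly brace may be pulled out of the sum as this constant.

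Once that is done, the $m=1$ form of Theorem \ref{thm-a} reads $\sin(\pi x)/\pi = (p-r+x)(1+q-r-x)\sum_{k\geq0}\frac{(x)_{k+p}(1-x)_{k+q}}{(k+r)!(k+p+q-r+1)!} + r(p+q-r+1)\frac{(x)_p(1-x)_q}{r!(p+q-r+1)!}$, and dividing both sides by $(p-r+x)(1+q-r-x)$ (nonzero, since it appears in the denominators of the asserted formula) and transposing the boundary term yields precisely the claimed expansion. I do not expect any genuine obstacle here: convergence of the series and the vanishing of the $n\to\infty$ limit term are already settled inside the proof of Theorem \ref{thm-a}, so the corollary amounts to recording the specialization together with the constancy of the bracket.
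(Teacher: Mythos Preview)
Your proposal is correct and follows exactly the paper's approach: the paper derives Corollary~\ref{corl-a} as the $m=1$ case of Theorem~\ref{thm-a}, and you supply the one algebraic detail the paper leaves implicit, namely that the bracket collapses to the $k$-independent constant $(p-r+x)(1+q-r-x)$. One small wording slip: after dividing by $(p-r+x)(1+q-r-x)$ no ``transposition'' of the boundary term is needed, since it is already on the right-hand side in both the theorem and the corollary.
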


We point out that the case $r=0$ of Corollary \ref{corl-a} can be
covered by the main theorem of Liu \cito{liu-a}. Eight series
expansions for $1/\pi$ with three free parameters
 from this corollary are displayed as follows.

\begin{exam}[$x=1/2$ in Corollary \ref{corl-a}]\label{exam-a}
  \bnm
 \quad\frac{4}{\pi(2p-2r+1)(2q-2r+1)}&&\xqdn=\sum_{k=0}^{\infty}\frac{(1/2)_{k+p}(1/2)_{k+q}}
 {(k+r)!(k+p+q-r+1)!}\\&&\xqdn+\frac{4r(p+q-r+1)}{(2p-2r+1)(2q-2r+1)}\frac{(1/2)_p(1/2)_q}{r!(p+q-r+1)!}.
 \enm
\end{exam}

\begin{exam}[$x=1/6$ in Corollary \ref{corl-a}]\label{exam-b}
  \bnm
 \quad\frac{18}{\pi(6p-6r+1)(6q-6r+5)}&&\xqdn=\sum_{k=0}^{\infty}\frac{(1/6)_{k+p}(5/6)_{k+q}}
 {(k+r)!(k+p+q-r+1)!}\\&&\xqdn+\frac{36r(p+q-r+1)}{(6p-6r+1)(6q-6r+5)}\frac{(1/6)_p(5/6)_q}{r!(p+q-r+1)!}.
 \enm
\end{exam}

\begin{exam}[$x=1/4$ in Corollary \ref{corl-a}]\label{exam-c}
  \bnm
 \quad\frac{8\sqrt{2}}{\pi(4p-4r+1)(4q-4r+3)}&&\xqdn=\sum_{k=0}^{\infty}\frac{(1/4)_{k+p}(3/4)_{k+q}}
 {(k+r)!(k+p+q-r+1)!}\\&&\xqdn+\frac{16r(p+q-r+1)}{(4p-4r+1)(4q-4r+3)}\frac{(1/4)_p(3/4)_q}{r!(p+q-r+1)!}.
 \enm
\end{exam}

\begin{exam}[$x=1/3$ in Corollary \ref{corl-a}]\label{exam-d}
  \bnm
 \quad\frac{9\sqrt{3}}{2\pi(3p-3r+1)(3q-3r+2)}&&\xqdn=\sum_{k=0}^{\infty}\frac{(1/3)_{k+p}(2/3)_{k+q}}
 {(k+r)!(k+p+q-r+1)!}\\&&\xqdn+\frac{9r(p+q-r+1)}{(3p-3r+1)(3q-3r+2)}\frac{(1/3)_p(2/3)_q}{r!(p+q-r+1)!}.
 \enm
\end{exam}

\begin{exam}[$x=1/10$ in Corollary \ref{corl-a}]\label{exam-e}
  \bnm
 \qqdn\frac{25(\sqrt{5}-1)}{\pi(10p-10r+1)(10q-10r+9)}&&\xqdn=\sum_{k=0}^{\infty}\frac{(1/10)_{k+p}(9/10)_{k+q}}
 {(k+r)!(k+p+q-r+1)!}\\&&\xqdn+\frac{100r(p+q-r+1)}{(10p-10r+1)(10q-10r+9)}\frac{(1/10)_p(9/10)_q}{r!(p+q-r+1)!}.
 \enm
\end{exam}

\begin{exam}[$x=3/10$ in Corollary \ref{corl-a}]\label{exam-f}
  \bnm
\qqdn\frac{25(\sqrt{5}+1)}{\pi(10p-10r+3)(10q-10r+7)}&&\xqdn=\sum_{k=0}^{\infty}\frac{(3/10)_{k+p}(7/10)_{k+q}}
 {(k+r)!(k+p+q-r+1)!}\\&&\xqdn+\frac{100r(p+q-r+1)}{(10p-10r+3)(10q-10r+7)}\frac{(3/10)_p(7/10)_q}{r!(p+q-r+1)!}.
 \enm
\end{exam}

\begin{exam}[$x=1/12$ in Corollary \ref{corl-a}]\label{exam-g}
  \bnm
 \:\frac{36(\sqrt{6}-\sqrt{2})}{\pi(12p-12r+1)(12q-12r+11)}&&\xqdn=\sum_{k=0}^{\infty}\frac{(1/12)_{k+p}(11/12)_{k+q}}
 {(k+r)!(k+p+q-r+1)!}\\&&\xqdn+\frac{144r(p+q-r+1)}{(12p-12r+1)(12q-12r+11)}\frac{(1/12)_p(11/12)_q}{r!(p+q-r+1)!}.
 \enm
\end{exam}

\begin{exam}[$x=5/12$ in Corollary \ref{corl-a}]\label{exam-h}
   \bnm
 \qqdn\frac{36(\sqrt{6}+\sqrt{2})}{\pi(12p-12r+5)(12q-12r+7)}&&\xqdn=\sum_{k=0}^{\infty}\frac{(5/12)_{k+p}(7/12)_{k+q}}
 {(k+r)!(k+p+q-r+1)!}\\&&\xqdn+\frac{144r(p+q-r+1)}{(12p-12r+5)(12q-12r+7)}\frac{(5/12)_p(7/12)_q}{r!(p+q-r+1)!}.
 \enm
\end{exam}

Setting $m=2$ and $p_1=p_2=q_1=q_2=r_1=r_2=0$ in Theorem
\ref{thm-a}, we gain the following summation formula on sine
function.

\begin{corl}\label{corl-b} For two complex numbers $x$ and $y$, there holds:
 \bnm
 \quad\frac{\sin(\pi x)\sin(\pi y)}{\pi^2}&&\xqdn=\sum_{k=0}^{\infty}\frac{(x)_k(1-x)_k(y)_k(1-y)_k}
 {k!^2(k+1)!^2}\\&&\xqdn\times\:\big\{(k^2+k)(x+y-x^2-y^2)+xy(1-x)(1-y)\big\}.
 \enm
\end{corl}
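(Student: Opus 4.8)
The plan is to derive Corollary \ref{corl-b} as a direct specialization of Theorem \ref{thm-a}. Setting $m=2$ and $p_1=p_2=q_1=q_2=r_1=r_2=0$, the hypothesis $\min\{r_i,p_i+q_i-r_i+1\}\geq 0$ becomes $\min\{0,1\}\geq 0$, which holds, so the theorem applies with $x_1=x$, $x_2=y$. The left-hand side $\prod_{i=1}^2\sin(\pi x_i)/\pi^2$ becomes $\sin(\pi x)\sin(\pi y)/\pi^2$ immediately. For the summand, the prefactor $\prod_{i=1}^m (x_i)_{k+p_i}(1-x_i)_{k+q_i}/\big((k+r_i)!(k+p_i+q_i-r_i+1)!\big)$ collapses to $(x)_k(1-x)_k(y)_k(1-y)_k/\big(k!^2(k+1)!^2\big)$, which is exactly the first factor displayed in the corollary.

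The one genuine computation is the brace factor. From Theorem \ref{thm-a} it reads
\[
\prod_{i=1}^{2}(k+x_i)(k-x_i+1)-\prod_{i=1}^{2}(k)(k+1)
=(k+x)(k-x+1)(k+y)(k-y+1)-k^2(k+1)^2 .
\]
First I would observe that $(k+x)(k-x+1)=k^2+k+x-x^2=(k^2+k)+x(1-x)$, and similarly $(k+y)(k-y+1)=(k^2+k)+y(1-y)$. Writing $N=k^2+k$, $A=x(1-x)$, $B=y(1-y)$, the brace is $(N+A)(N+B)-N^2=N(A+B)+AB=(k^2+k)\big(x(1-x)+y(1-y)\big)+x(1-x)y(1-y)$. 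Since $x(1-x)+y(1-y)=x+y-x^2-y^2$ and $x(1-x)y(1-y)=xy(1-x)(1-y)$, this is precisely $(k^2+k)(x+y-x^2-y^2)+xy(1-x)(1-y)$, matching the corollary.

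Finally, the boundary term of Theorem \ref{thm-a}, namely $\prod_{i=1}^m (x_i)_{p_i}(1-x_i)_{q_i}\,r_i(p_i+q_i-r_i+1)/\big(r_i!(p_i+q_i-r_i+1)!\big)$, contains the factor $r_i=0$ for each $i$, hence vanishes. Assembling the three pieces — unchanged left-hand side, collapsed prefactor, simplified brace, and vanishing boundary term — yields the stated identity. There is no real obstacle here: the only thing to be careful about is the elementary factorization $(k+x)(k-x+1)=(k^2+k)+x(1-x)$ and tracking that the empty/boundary contribution is killed by $r_1=r_2=0$; everything else is substitution.
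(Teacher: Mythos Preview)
Your proof is correct and follows exactly the paper's approach: the paper obtains Corollary~\ref{corl-b} by setting $m=2$ and $p_1=p_2=q_1=q_2=r_1=r_2=0$ in Theorem~\ref{thm-a}, and you have simply supplied the routine algebraic details (the factorization $(k+x)(k-x+1)=(k^2+k)+x(1-x)$ and the observation that the boundary term vanishes because $r_i=0$) that the paper leaves implicit.
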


Twelve series expansions for $1/\pi^2$
 from Corollary \ref{corl-b} are laid out as follows.

\begin{exam}[$x=y=1/2$ in Corollary \ref{corl-b}]\label{exam-i}
   \bnm
 \frac{2}{\pi^2}=\sum_{k=0}^{\infty}\frac{(1/2)_{k}^4}{k!^2(k+1)!^2}\{k^2+k+1/8\}.
 \enm
\end{exam}

\begin{exam}[$x=y=1/3$ in Corollary \ref{corl-b}]\label{exam-j}
   \bnm
 \frac{27}{16\pi^2}=\sum_{k=0}^{\infty}\frac{(1/3)_{k}^2(2/3)_{k}^2}{k!^2(k+1)!^2}\{k^2+k+1/9\}.
 \enm
\end{exam}

\begin{exam}[$x=y=1/4$ in Corollary \ref{corl-b}]\label{exam-k}
   \bnm
 \:\frac{4}{3\pi^2}=\sum_{k=0}^{\infty}\frac{(1/4)_{k}^2(3/4)_{k}^2}{k!^2(k+1)!^2}\{k^2+k+3/32\}.
 \enm
\end{exam}

\begin{exam}[$x=y=1/6$ in Corollary \ref{corl-b}]\label{exam-l}
   \bnm
 \:\xxqdn\frac{9}{10\pi^2}=\sum_{k=0}^{\infty}\frac{(1/6)_{k}^2(5/6)_{k}^2}{k!^2(k+1)!^2}\{k^2+k+5/72\}.
 \enm
\end{exam}

\begin{exam}[$x=1/2$ and $x=1/6$ in Corollary \ref{corl-b}]\label{exam-m}
   \bnm
 \frac{9}{7\pi^2}=\sum_{k=0}^{\infty}\frac{(1/2)_{k}^2(1/6)_{k}(5/6)_{k}}{k!^2(k+1)!^2}\{k^2+k+5/56\}.
 \enm
\end{exam}

\begin{exam}[$x=1/10$ and $x=3/10$ in Corollary \ref{corl-b}]\label{exam-n}
   \bnm
 \frac{5}{6\pi^2}=\sum_{k=0}^{\infty}\frac{(1/10)_{k}(3/10)_{k}(7/10)_{k}(9/10)_{k}}{k!^2(k+1)!^2}\{k^2+k+63/1000\}.
 \enm
\end{exam}

\begin{exam}[$x=1/12$ and $x=5/12$ in Corollary \ref{corl-b}]\label{exam-o}
   \bnm
 \:\:\frac{18}{23\pi^2}=\sum_{k=0}^{\infty}\frac{(1/12)_{k}(5/12)_{k}(7/12)_{k}(11/12)_{k}}{k!^2(k+1)!^2}\{k^2+k+385/6624\}.
 \enm
\end{exam}

\begin{exam}[$x=1/2$ and $x=1/4$ in Corollary \ref{corl-b}]\label{exam-p}
   \bnm
\qdn\xxqdn\frac{8\sqrt{2}}{7\pi^2}=\sum_{k=0}^{\infty}\frac{(1/2)_{k}^2(1/4)_{k}(3/4)_{k}}{k!^2(k+1)!^2}\{k^2+k+3/28\}.
 \enm
\end{exam}

\begin{exam}[$x=1/4$ and $x=1/6$ in Corollary \ref{corl-b}]\label{exam-q}
   \bnm
 \frac{36\sqrt{2}}{47\pi^2}=\sum_{k=0}^{\infty}\frac{(1/4)_{k}(3/4)_{k}(1/6)_{k}(5/6)_{k}}{k!^2(k+1)!^2}\{k^2+k+15/188\}.
 \enm
\end{exam}

\begin{exam}[$x=1/2$ and $x=1/3$ in Corollary \ref{corl-b}]\label{exam-r}
   \bnm
 \!\xxqdn\frac{18\sqrt{3}}{17\pi^2}=\sum_{k=0}^{\infty}\frac{(1/2)_{k}^2(1/3)_{k}(2/3)_{k}}{k!^2(k+1)!^2}\{k^2+k+2/17\}.
 \enm
\end{exam}

\begin{exam}[$x=1/3$ and $x=1/6$ in Corollary \ref{corl-b}]\label{exam-s}
   \bnm
 \frac{9\sqrt{3}}{13\pi^2}=\sum_{k=0}^{\infty}\frac{(1/3)_{k}(2/3)_{k}(1/6)_{k}(5/6)_{k}}{k!^2(k+1)!^2}\{k^2+k+10/117\}.
 \enm
\end{exam}

\begin{exam}[$x=1/3$ and $x=1/4$ in Corollary \ref{corl-b}]\label{exam-t}
   \bnm
 \:\xqdn\frac{36\sqrt{6}}{59\pi^2}=\sum_{k=0}^{\infty}\frac{(1/3)_{k}(2/3)_{k}(1/4)_{k}(3/4)_{k}}{k!^2(k+1)!^2}\{k^2+k+6/59\}.
 \enm
\end{exam}

Now we begin to display eight series expansions for $1/\pi^m$ from
Theorem \ref{thm-a}.

\begin{corl}[$x_i=1/2$ and $p_i=q_i=r_i=0$ in Theorem \ref{thm-a} with $1\leq i\leq m$]\label{corl-c}
 \bnm
\:\:\xxqdn\frac{1}{\pi^m}=\sum_{k=0}^{\infty}\frac{(1/2)_k^{2m}}{k!^m(k+1)!^m}\big\{(1/2+k)^{2m}-k^m(k+1)^m\big\}.
 \enm
\end{corl}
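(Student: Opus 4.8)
The plan is to read off Corollary \ref{corl-c} as the uniform specialization of Theorem \ref{thm-a} obtained by setting $x_i=1/2$ and $p_i=q_i=r_i=0$ for every $i$ with $1\le i\le m$. First I would check that this choice is admissible: since $r_i=0$ and $p_i+q_i-r_i+1=1$, we have $\min\{r_i,\,p_i+q_i-r_i+1\}=0\ge 0$, so Theorem \ref{thm-a} applies to this parameter choice.

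Next I would evaluate the three pieces appearing in Theorem \ref{thm-a} under the substitution. The factor on the left becomes $1/\pi^m$, because each $\sin(\pi x_i)=\sin(\pi/2)=1$. In the summand, the shifted-factorial quotient collapses via $(x_i)_{k+p_i}(1-x_i)_{k+q_i}=(1/2)_k(1/2)_k$ and $(k+r_i)!\,(k+p_i+q_i-r_i+1)!=k!\,(k+1)!$, so the product over $i$ equals $\dfrac{(1/2)_k^{2m}}{k!^m(k+1)!^m}$. The bracketed factor becomes $\prod_{i=1}^m(k+1/2)(k+1/2)-\prod_{i=1}^m k(k+1)=(1/2+k)^{2m}-k^m(k+1)^m$, which matches the brace in the statement.

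It then remains to handle the boundary term $\prod_{i=1}^m\dfrac{(x_i)_{p_i}(1-x_i)_{q_i}}{r_i!\,(p_i+q_i-r_i+1)!}\,r_i(p_i+q_i-r_i+1)$: each factor contains $r_i=0$, so the whole product vanishes and the term drops out. Substituting these evaluations into Theorem \ref{thm-a} and discarding the vanishing boundary term produces exactly the asserted identity. I do not expect any genuine obstacle here; the argument is a routine specialization, and the only points worth an explicit check are the admissibility condition $\min\{r_i,\,p_i+q_i-r_i+1\}\ge 0$ and the vanishing of the boundary term, both of which follow at once from $r_i=0$.
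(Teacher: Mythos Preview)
Your argument is correct and matches the paper's approach exactly: the corollary is stated in the paper simply as the specialization $x_i=1/2$, $p_i=q_i=r_i=0$ of Theorem \ref{thm-a}, with no further justification given beyond the bracketed parameter choice. Your explicit verification of the admissibility condition and the vanishing of the boundary term makes the derivation fully transparent.
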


\begin{corl}[$x_i=1/6$ and $p_i=q_i=r_i=0$ in Theorem \ref{thm-a} with $1\leq i\leq m$]\label{corl-d}
 \bnm
 \quad\frac{1}{(2\pi)^m}=\sum_{k=0}^{\infty}\frac{(1/6)_k^{m}(5/6)_k^{m}}{k!^m(k+1)!^m}\big\{(1/6+k)^{m}(5/6+k)^{m}-k^m(k+1)^m\big\}.
 \enm
\end{corl}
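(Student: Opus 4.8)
The plan is to derive Corollary~\ref{corl-d} by direct specialization of Theorem~\ref{thm-a}. First I would put $x_i=1/6$ and $p_i=q_i=r_i=0$ for each $i$ with $1\leq i\leq m$; the hypothesis $\min\{r_i,\,p_i+q_i-r_i+1\}=\min\{0,1\}=0\geq0$ holds, so the theorem is applicable with these values.

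Next I would simplify the three ingredients. On the left-hand side, $\sin(\pi x_i)=\sin(\pi/6)=1/2$, whence $\prod_{i=1}^m\sin(\pi x_i)/\pi^m=1/(2\pi)^m$. In the summand, the shifted factorials collapse: $(x_i)_{k+p_i}=(1/6)_k$ and $(1-x_i)_{k+q_i}=(5/6)_k$, while $(k+r_i)!=k!$ and $(k+p_i+q_i-r_i+1)!=(k+1)!$; taking the product over $i$ gives the common factor $(1/6)_k^m(5/6)_k^m/\big(k!^m(k+1)!^m\big)$. The bracketed difference $\prod_{i=1}^m(k+x_i+p_i)(k-x_i+q_i+1)-\prod_{i=1}^m(k+r_i)(k+p_i+q_i-r_i+1)$ becomes $(1/6+k)^m(5/6+k)^m-k^m(k+1)^m$.

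Finally I would observe that the boundary term $\prod_{i=1}^m\frac{(x_i)_{p_i}(1-x_i)_{q_i}}{r_i!(p_i+q_i-r_i+1)!}\,r_i(p_i+q_i-r_i+1)$ is zero, since it carries the factor $r_i=0$. Combining the three simplifications reproduces the asserted identity verbatim. No genuine difficulty arises here; the only steps requiring a little attention are checking that the boundary term drops out and correctly bookkeeping the index shifts in the Pochhammer symbols and factorials, both of which are entirely mechanical.
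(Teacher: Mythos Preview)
Your proposal is correct and follows exactly the route the paper takes: Corollary~\ref{corl-d} is stated there simply as the specialization $x_i=1/6$, $p_i=q_i=r_i=0$ of Theorem~\ref{thm-a}, with no additional argument supplied. Your verification of the hypothesis, the simplification $\sin(\pi/6)=1/2$, the collapse of the Pochhammer and factorial factors, and the vanishing of the boundary term via $r_i=0$ are precisely the mechanical checks that justify this specialization.
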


\begin{corl}[$x_i=1/4$ and $p_i=q_i=r_i=0$ in Theorem \ref{thm-a} with $1\leq i\leq m$]\label{corl-e}
 \bnm
 \quad\frac{1}{(\sqrt{2}\pi)^m}=\sum_{k=0}^{\infty}\frac{(1/4)_k^{m}(3/4)_k^{m}}{k!^m(k+1)!^m}\big\{(1/4+k)^{m}(3/4+k)^{m}-k^m(k+1)^m\big\}.
 \enm
\end{corl}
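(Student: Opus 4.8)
The plan is to obtain Corollary \ref{corl-e} as a direct specialization of Theorem \ref{thm-a}; no new analytic ingredient is needed. First I would verify that the parameter choice $x_i=1/4$ and $p_i=q_i=r_i=0$ for $1\leq i\leq m$ is admissible: the hypothesis $\min\{r_i,\,p_i+q_i-r_i+1\}\geq 0$ reads $\min\{0,1\}=0\geq 0$, so Theorem \ref{thm-a} applies as stated.

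Next I would substitute these values into the three pieces of the theorem's identity and simplify. With $p_i=q_i=r_i=0$ one has $(x_i)_{k+p_i}=(1/4)_k$, $(1-x_i)_{k+q_i}=(3/4)_k$, $(k+r_i)!=k!$ and $(k+p_i+q_i-r_i+1)!=(k+1)!$, so the product standing in front of the curly braces collapses to $\frac{(1/4)_k^m(3/4)_k^m}{k!^m(k+1)!^m}$. Inside the braces, $(k+x_i+p_i)(k-x_i+q_i+1)=(k+1/4)(k+3/4)$ and $(k+r_i)(k+p_i+q_i-r_i+1)=k(k+1)$, which produces the factor $(1/4+k)^m(3/4+k)^m-k^m(k+1)^m$. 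Finally, the boundary term $\prod_{i=1}^m\frac{(x_i)_{p_i}(1-x_i)_{q_i}}{r_i!(p_i+q_i-r_i+1)!}\,r_i(p_i+q_i-r_i+1)$ vanishes, since every factor carries $r_i=0$.

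It then remains to evaluate the left-hand side: because $\sin(\pi/4)=1/\sqrt2$, we get $\prod_{i=1}^m\sin(\pi x_i)/\pi^m=(1/\sqrt2)^m/\pi^m=1/(\sqrt2\,\pi)^m$, which is exactly the left side of Corollary \ref{corl-e}. Assembling these computations yields the asserted series expansion. The only place asking for a bit of attention is the bookkeeping with the product symbols — one must keep the two $m$-fold products inside the braces separate and subtract only after both have been formed, rather than merging them prematurely — but this is purely formal, so I anticipate no real difficulty.
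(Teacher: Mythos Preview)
Your proposal is correct and follows exactly the paper's approach: the corollary is stated as an immediate specialization of Theorem~\ref{thm-a} with the indicated parameter values, and your substitution and simplification reproduce that derivation faithfully. There is nothing to add.
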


\begin{corl}[$x_i=1/3$ and $p_i=q_i=r_i=0$ in Theorem \ref{thm-a} with $1\leq i\leq m$]\label{corl-f}
 \bnm
 \quad\bigg(\frac{\sqrt{3}}{2\pi}\bigg)^m=\sum_{k=0}^{\infty}\frac{(1/3)_k^{m}(2/3)_k^{m}}{k!^m(k+1)!^m}\big\{(1/3+k)^{m}(2/3+k)^{m}-k^m(k+1)^m\big\}.
 \enm
\end{corl}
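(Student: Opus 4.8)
The plan is to derive Corollary \ref{corl-f} as a direct specialization of Theorem \ref{thm-a}, requiring no new input. I would set $x_i=1/3$ and $p_i=q_i=r_i=0$ for every $i$ with $1\leq i\leq m$. The first thing to verify is the hypothesis of the theorem: $\min\{r_i,\,p_i+q_i-r_i+1\}=\min\{0,1\}=0\geq 0$, so Theorem \ref{thm-a} is indeed applicable with these parameters.

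Next I would simplify each of the three ingredients of the theorem under this substitution. On the left-hand side, $\prod_{i=1}^m\sin(\pi x_i)/\pi^m$ becomes $\big(\sin(\pi/3)\big)^m/\pi^m=(\sqrt{3}/2)^m/\pi^m=\big(\sqrt{3}/(2\pi)\big)^m$, using $\sin(\pi/3)=\sqrt{3}/2$. Inside the sum, the shifted-factorial factor $\prod_{i=1}^m\frac{(x_i)_{k+p_i}(1-x_i)_{k+q_i}}{(k+r_i)!(k+p_i+q_i-r_i+1)!}$ collapses to $\frac{(1/3)_k^{m}(2/3)_k^{m}}{k!^m(k+1)!^m}$, since $1-1/3=2/3$ and all integer shifts are $0$; the two products in the braces become $\prod_{i=1}^m(k+x_i+p_i)(k-x_i+q_i+1)=(k+1/3)^m(k+2/3)^m$ and $\prod_{i=1}^m(k+r_i)(k+p_i+q_i-r_i+1)=k^m(k+1)^m$.

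Finally I would observe that the boundary term $\prod_{i=1}^m\frac{(x_i)_{p_i}(1-x_i)_{q_i}}{r_i!(p_i+q_i-r_i+1)!}\,r_i(p_i+q_i-r_i+1)$ vanishes, because each factor carries the multiplier $r_i=0$. Assembling the pieces yields exactly $\big(\frac{\sqrt{3}}{2\pi}\big)^m=\sum_{k=0}^{\infty}\frac{(1/3)_k^{m}(2/3)_k^{m}}{k!^m(k+1)!^m}\big\{(1/3+k)^{m}(2/3+k)^{m}-k^m(k+1)^m\big\}$, as claimed. There is no real obstacle: the whole argument is a substitution, and the only two points deserving a moment's care are the evaluation $\sin(\pi/3)=\sqrt{3}/2$ and the fact that $r_i=0$ annihilates the boundary contribution — the same mechanism that produces Corollaries \ref{corl-c}, \ref{corl-d}, and \ref{corl-e} from the values $x_i=1/2,\ 1/6,\ 1/4$.
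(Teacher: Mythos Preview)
Your proposal is correct and follows exactly the paper's approach: the corollary is stated in the paper simply as the specialization $x_i=1/3$, $p_i=q_i=r_i=0$ of Theorem~\ref{thm-a}, with no separate proof given. Your verification of the hypothesis, the simplification of the summand and brace factors, the evaluation $\sin(\pi/3)=\sqrt{3}/2$, and the vanishing of the boundary term via $r_i=0$ are all accurate and constitute the entire argument.
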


\begin{corl}[$x_i=1/10$ and $p_i=q_i=r_i=0$ in Theorem \ref{thm-a} with $1\leq i\leq m$]\label{corl-g}
 \bnm
 \quad\bigg(\frac{\sqrt{5}-1}{4\pi}\bigg)^m=\sum_{k=0}^{\infty}\frac{(1/10)_k^{m}(9/10)_k^{m}}{k!^m(k+1)!^m}\big\{(1/10+k)^{m}(9/10+k)^{m}-k^m(k+1)^m\big\}.
 \enm
\end{corl}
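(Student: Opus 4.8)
The plan is to obtain Corollary~\ref{corl-g} by a direct specialization of Theorem~\ref{thm-a}; no new machinery is needed. First I would verify that the hypothesis of Theorem~\ref{thm-a} is satisfied for the chosen parameters: with $p_i=q_i=r_i=0$ one has $\min\{r_i,\,p_i+q_i-r_i+1\}=\min\{0,1\}=0\geq0$, so the theorem applies verbatim.

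Next I would carry out the substitution $x_i=1/10$, $p_i=q_i=r_i=0$ for $1\leq i\leq m$. The two factorials in each factor collapse to $(k+r_i)!=k!$ and $(k+p_i+q_i-r_i+1)!=(k+1)!$, and since $1-x_i=9/10$ the shifted factorials become $(x_i)_{k+p_i}(1-x_i)_{k+q_i}=(1/10)_k(9/10)_k$. The braced quantity reduces to
\[
\prod_{i=1}^m(k+x_i+p_i)(k-x_i+q_i+1)-\prod_{i=1}^m(k+r_i)(k+p_i+q_i-r_i+1)
=(1/10+k)^m(9/10+k)^m-k^m(k+1)^m .
\]
The entire boundary term $\prod_{i=1}^m\frac{(x_i)_{p_i}(1-x_i)_{q_i}}{r_i!(p_i+q_i-r_i+1)!}\,r_i(p_i+q_i-r_i+1)$ vanishes because each factor carries $r_i=0$. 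Hence the right-hand side of Theorem~\ref{thm-a} becomes precisely the series displayed in Corollary~\ref{corl-g}.

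Finally I would evaluate the left-hand side. Since $x_i=1/10$ for all $i$, it equals $\bigl(\sin(\pi/10)/\pi\bigr)^m$, so it remains only to record the classical surd value $\sin(\pi/10)=\sin 18^\circ=\frac{\sqrt5-1}{4}$; this follows, for instance, from $\sin 18^\circ=\cos 72^\circ=2\cos^2 36^\circ-1$ together with $\cos 36^\circ=\frac{1+\sqrt5}{4}$, or directly by observing that $t=\sin 18^\circ$ is the positive root of $4t^2+2t-1=0$. Substituting gives the left-hand side $\bigl(\tfrac{\sqrt5-1}{4\pi}\bigr)^m$, matching the statement. There is no genuine obstacle here: the argument is a routine specialization, and the only external input is the elementary evaluation of $\sin(\pi/10)$.
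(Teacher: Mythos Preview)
Your proposal is correct and matches the paper's approach exactly: the corollary is stated in the paper simply as the specialization $x_i=1/10$, $p_i=q_i=r_i=0$ of Theorem~\ref{thm-a}, with no separate proof given, and your write-up carries out precisely that substitution together with the elementary evaluation $\sin(\pi/10)=(\sqrt{5}-1)/4$.
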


\begin{corl}[$x_i=3/10$ and $p_i=q_i=r_i=0$ in Theorem \ref{thm-a} with $1\leq i\leq m$]\label{corl-h}
 \bnm
 \quad\bigg(\frac{\sqrt{5}+1}{4\pi}\bigg)^m=\sum_{k=0}^{\infty}\frac{(3/10)_k^{m}(7/10)_k^{m}}{k!^m(k+1)!^m}\big\{(3/10+k)^{m}(7/10+k)^{m}-k^m(k+1)^m\big\}.
 \enm
\end{corl}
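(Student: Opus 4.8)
The plan is to obtain Corollary \ref{corl-h} as a direct specialization of Theorem \ref{thm-a}, taking $x_i=3/10$ and $p_i=q_i=r_i=0$ for every $i$ with $1\leq i\leq m$. First I would check that this choice is admissible for the theorem: here $\min\{r_i,\,p_i+q_i-r_i+1\}=\min\{0,1\}=0\geq0$, so Theorem \ref{thm-a} indeed applies.

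Next I would simplify each of the three pieces on the right-hand side of Theorem \ref{thm-a} under this choice. With $p_i=q_i=r_i=0$ the shifted-factorial numerator $(x_i)_{k+p_i}(1-x_i)_{k+q_i}$ becomes $(3/10)_k(7/10)_k$, and the factorial denominator $(k+r_i)!\,(k+p_i+q_i-r_i+1)!$ becomes $k!\,(k+1)!$, so the product over $i$ contributes $(3/10)_k^{m}(7/10)_k^{m}/\big(k!^m(k+1)!^m\big)$. The bracketed factor $\prod_{i=1}^m(k+x_i+p_i)(k-x_i+q_i+1)-\prod_{i=1}^m(k+r_i)(k+p_i+q_i-r_i+1)$ collapses to $(k+3/10)^{m}(k+7/10)^{m}-k^m(k+1)^m$. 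Finally, every factor of the closing additive term carries the scalar $r_i(p_i+q_i-r_i+1)=0\cdot1=0$, so that whole term vanishes.

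It then remains to identify the left-hand side. Theorem \ref{thm-a} gives $\prod_{i=1}^m\sin(\pi x_i)/\pi^m=\sin^m(3\pi/10)/\pi^m$, and the key computational input is the classical value $\sin(3\pi/10)=\cos(\pi/5)=(1+\sqrt5)/4$; substituting this yields $\big((\sqrt5+1)/(4\pi)\big)^m$, which is exactly the stated left-hand side. Assembling the three simplified pieces with this evaluation completes the proof.

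There is essentially no serious obstacle here: the argument is a routine substitution into Theorem \ref{thm-a}, and the only content beyond bookkeeping is the evaluation of $\sin(3\pi/10)$, which can be justified, e.g., from $\sin5\theta=0$ at $\theta=3\pi/10$ or from the standard value of $\cos36^\circ$. One should also note that the passage from the terminating identity to the infinite series — in particular the limit computation $\lim_{n\to\infty}(x_i)_{n+p_i+1}(1-x_i)_{n+q_i+1}/\big((n+r_i)!\,(n+p_i+q_i-r_i+1)!\big)=\sin(\pi x_i)/\pi$ — is already carried out inside the proof of Theorem \ref{thm-a}, so nothing new is required.
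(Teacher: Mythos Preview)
Your proposal is correct and follows exactly the paper's approach: the corollary is obtained by direct specialization of Theorem~\ref{thm-a} with $x_i=3/10$ and $p_i=q_i=r_i=0$, the additive term vanishes since $r_i=0$, and the left side is identified via $\sin(3\pi/10)=(\sqrt5+1)/4$.
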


\begin{corl}[$x_i=1/12$ and $p_i=q_i=r_i=0$ in Theorem \ref{thm-a} with $1\leq i\leq m$]\label{corl-i}
 \bnm
 \quad\bigg(\frac{\sqrt{6}-\sqrt{2}}{4\pi}\bigg)^m=\sum_{k=0}^{\infty}\frac{(1/12)_k^{m}(11/12)_k^{m}}{k!^m(k+1)!^m}\big\{(1/12+k)^{m}(11/12+k)^{m}-k^m(k+1)^m\big\}.
 \enm
\end{corl}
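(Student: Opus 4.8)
The plan is to obtain Corollary \ref{corl-i} as the direct specialization of Theorem \ref{thm-a} recorded in its label: keep $m$ arbitrary, set $x_i = 1/12$ for every $i$ with $1 \le i \le m$, and choose all the integer parameters $p_i = q_i = r_i = 0$. The first thing I would check is that the standing hypothesis $\min\{r_i,\, p_i + q_i - r_i + 1\} \ge 0$ is satisfied; with the present choice it reads $\min\{0, 1\} = 0 \ge 0$, so Theorem \ref{thm-a} does apply, and no further justification of convergence or of the limit $n \to \infty$ is needed, those having already been settled inside the proof of Theorem \ref{thm-a}.

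Next I would substitute these values into the three lines of the identity of Theorem \ref{thm-a} and simplify. Since $p_i = q_i = r_i = 0$, the shifted factorials collapse to $(x_i)_{k+p_i} = (1/12)_k$ and $(1-x_i)_{k+q_i} = (11/12)_k$, while $(k+r_i)! = k!$ and $(k + p_i + q_i - r_i + 1)! = (k+1)!$; hence the common prefactor of the series becomes $(1/12)_k^m\,(11/12)_k^m\big/\big(k!^m (k+1)!^m\big)$. Inside the curly braces, $\prod_{i=1}^m (k + x_i + p_i)(k - x_i + q_i + 1)$ turns into $(k + 1/12)^m (k + 11/12)^m$ and $\prod_{i=1}^m (k + r_i)(k + p_i + q_i - r_i + 1)$ turns into $k^m (k+1)^m$, which is exactly the bracketed factor on the right-hand side of the corollary.

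It remains to deal with the boundary term and the left-hand side. The boundary term $\prod_{i=1}^m \frac{(x_i)_{p_i}(1-x_i)_{q_i}}{r_i!\,(p_i+q_i-r_i+1)!}\, r_i(p_i + q_i - r_i + 1)$ carries the factor $r_i = 0$, so it vanishes identically; this is why no additive constant survives. On the left-hand side, $\prod_{i=1}^m \sin(\pi x_i)\big/\pi^m = \big(\sin(\pi/12)\big/\pi\big)^m$, and the only genuine computation in the whole argument is the elementary trigonometric evaluation $\sin(\pi/12) = \sin 15^\circ = \frac{\sqrt{6} - \sqrt{2}}{4}$ (via the difference formula $\sin(45^\circ - 30^\circ)$ or a half-angle identity). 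Thus the left side equals $\left(\frac{\sqrt{6} - \sqrt{2}}{4\pi}\right)^m$, completing the proof. I do not anticipate any real obstacle: the proof is a substitution followed by one classical trig value, with all analytic subtleties inherited from Theorem \ref{thm-a}.
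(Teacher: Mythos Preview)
Your proposal is correct and matches the paper's approach exactly: the corollary is stated in the paper as the immediate specialization of Theorem \ref{thm-a} at $x_i=1/12$, $p_i=q_i=r_i=0$, with no separate proof given. Your verification of the hypothesis, the vanishing of the boundary term via the factor $r_i=0$, and the evaluation $\sin(\pi/12)=(\sqrt{6}-\sqrt{2})/4$ are precisely the steps needed to make that specialization explicit.
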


\begin{corl}[$x_i=5/12$ and $p_i=q_i=r_i=0$ in Theorem \ref{thm-a} with $1\leq i\leq m$]\label{corl-j}
 \bnm
 \quad\bigg(\frac{\sqrt{6}+\sqrt{2}}{4\pi}\bigg)^m=\sum_{k=0}^{\infty}\frac{(5/12)_k^{m}(7/12)_k^{m}}{k!^m(k+1)!^m}\big\{(5/12+k)^{m}(7/12+k)^{m}-k^m(k+1)^m\big\}.
 \enm
\end{corl}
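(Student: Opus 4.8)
The plan is to derive Corollary~\ref{corl-j} as a direct specialization of Theorem~\ref{thm-a}, so the work amounts to substituting the indicated parameter values and simplifying. First I would put $x_i=5/12$ and $p_i=q_i=r_i=0$ for all $i$ with $1\leq i\leq m$, and check the hypothesis of Theorem~\ref{thm-a}: here $r_i=0$ and $p_i+q_i-r_i+1=1$, so $\min\{r_i,\,p_i+q_i-r_i+1\}=0\geq0$, and the theorem applies.

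Next I would handle the left-hand side. Since all the $x_i$ coincide, $\prod_{i=1}^m\sin(\pi x_i)/\pi^m$ becomes $(\sin(5\pi/12))^m/\pi^m$, and the only genuine computation is the surd evaluation $\sin(5\pi/12)=\cos(\pi/12)=\tfrac{\sqrt6+\sqrt2}{4}$, obtained from the angle-sum formula $\sin(45^\circ+30^\circ)$ (or, equivalently, a half-angle identity). This turns the left-hand side into $\bigl(\tfrac{\sqrt6+\sqrt2}{4\pi}\bigr)^m$.

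Then I would simplify the right-hand side factor by factor, using $1-x_i=7/12$. With $p_i=q_i=r_i=0$ the ratio inside the sum reduces, for each $i$, to $(5/12)_k(7/12)_k/(k!\,(k+1)!)$, whose $m$-fold product is $\dfrac{(5/12)_k^{\,m}(7/12)_k^{\,m}}{k!^m(k+1)!^m}$; the bracketed difference $\prod_{i=1}^m(k+x_i+p_i)(k-x_i+q_i+1)-\prod_{i=1}^m(k+r_i)(k+p_i+q_i-r_i+1)$ becomes $(k+5/12)^m(k+7/12)^m-k^m(k+1)^m$; and the boundary term $\prod_{i=1}^m\frac{(x_i)_{p_i}(1-x_i)_{q_i}}{r_i!(p_i+q_i-r_i+1)!}\,r_i(p_i+q_i-r_i+1)$ is zero because each factor carries $r_i=0$. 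Collecting the three pieces reproduces the asserted identity.

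I do not anticipate a real obstacle: the statement is a routine substitution into Theorem~\ref{thm-a}. The only steps needing attention are the closed-form evaluation of $\sin(5\pi/12)$ and the bookkeeping $1-x_i=7/12$; the vanishing of the boundary term is immediate once one notices the common factor $r_i$.
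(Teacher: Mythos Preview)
Your proposal is correct and follows exactly the route implicit in the paper: the corollary is stated there only as the specialization $x_i=5/12$, $p_i=q_i=r_i=0$ of Theorem~\ref{thm-a}, with no separate argument given. Your verification of the hypothesis, the evaluation $\sin(5\pi/12)=(\sqrt{6}+\sqrt{2})/4$, and the vanishing of the boundary term are precisely the checks one needs.
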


With the change of the parameters, Theorem \ref{thm-a} can produce
more series expansions for $1/\pi^m$. The corresponding results will
not be laid out here.
\section{Series expansions for $\pi^m$}

\begin{thm}\label{thm-b} For $m$ complex numbers $\{x_i\}_{i=1}^m$ and $3m$ integers $\{p_i,q_i,r_i\}_{i=1}^m$
with $\min\{p_i, q_i\}\geq0$, there holds:
 \bnm
 \quad\frac{\pi^m}{\prod_{i=1}^m\sin(\pi x_i)}&&\xqdn=\:
  \sum_{k=0}^{\infty}\prod_{i=1}^m\frac{(k+p_i)!(k+q_i)!}{(x_i)_{k+r_i+1}(1-x_i)_{k+p_i+q_i-r_i+2}}\\
  &&\xqdn\times\:\Big\{\prod_{i=1}^m(k+p_i+1)(k+q_i+1)-\prod_{i=1}^m(k+x_i+r_i)(k-x_i+p_i+q_i-r_i+2)\Big\}\\
&&\xqdn+\:\prod_{i=1}^m\frac{p_i!q_i!}{(x_i)_{r_i}(1-x_i)_{p_i+q_i-r_i+1}}.
 \enm
\end{thm}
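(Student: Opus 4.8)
The plan is to replay the proof of Theorem~\ref{thm-a} with the telescoping sequence replaced by its reciprocal. Concretely, I would set
\[\tau_k=\prod_{i=1}^m\frac{(k+p_i+1)!\,(k+q_i+1)!}{(x_i)_{k+r_i+1}(1-x_i)_{k+p_i+q_i-r_i+2}},\]
where the hypothesis $\min\{p_i,q_i\}\geq0$ is exactly what guarantees that the factorials occurring in $\tau_k$, and in $\tau_{-1}$, are legitimate for every $k\geq0$.

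First I would compute $\nabla\tau_k=\tau_k-\tau_{k-1}$ by pulling out the common factor $\prod_{i=1}^m\frac{(k+p_i)!\,(k+q_i)!}{(x_i)_{k+r_i+1}(1-x_i)_{k+p_i+q_i-r_i+2}}$: from $\tau_k$ there remains the factor $\prod_{i=1}^m(k+p_i+1)(k+q_i+1)$, and from $\tau_{k-1}$, after using $(x_i)_{k+r_i}=(x_i)_{k+r_i+1}/(k+x_i+r_i)$ and $(1-x_i)_{k+p_i+q_i-r_i+1}=(1-x_i)_{k+p_i+q_i-r_i+2}/(k-x_i+p_i+q_i-r_i+2)$, there remains $\prod_{i=1}^m(k+x_i+r_i)(k-x_i+p_i+q_i-r_i+2)$. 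This reproduces exactly the $k$-th summand of the claimed identity. Feeding $\tau_k$ and $\nabla\tau_k$ into \eqref{telescoping-summation} then gives a terminating identity whose right-hand side is $\tau_n-\tau_{-1}$, and $\tau_{-1}=\prod_{i=1}^m\frac{p_i!\,q_i!}{(x_i)_{r_i}(1-x_i)_{p_i+q_i-r_i+1}}$ produces the closed-form term; convergence of the infinite series is then automatic because its partial sums are $\tau_n-\tau_{-1}$ (in fact the general term is $O(k^{-2})$, since the two products inside the brace agree in their $k^{2m}$ and $k^{2m-1}$ coefficients).

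It remains to evaluate $\lim_{n\to\infty}\tau_n$. Rewriting $\tau_n$ through the Gamma function gives
\[\tau_n=\prod_{i=1}^m\Gamma(x_i)\Gamma(1-x_i)\cdot\frac{\Gamma(n+p_i+2)\,\Gamma(n+q_i+2)}{\Gamma(n+x_i+r_i+1)\,\Gamma(n+p_i+q_i-r_i+3-x_i)},\]
and since $\Gamma(x_i)\Gamma(1-x_i)=\pi/\sin(\pi x_i)$, while $\Gamma(n+a)/\Gamma(n+b)\sim n^{a-b}$ forces the remaining quotient to $1$ — the exponents cancelling thanks to $(p_i+2)+(q_i+2)=(x_i+r_i+1)+(p_i+q_i-r_i+3-x_i)$ — we obtain $\tau_n\to\pi^m/\prod_{i=1}^m\sin(\pi x_i)$, which is the left-hand side of the asserted identity. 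I do not anticipate a genuine obstacle here: the argument is a mechanical transcription of the $1/\pi^m$ case, the only point needing care being the bookkeeping of the index shifts so that the four Gamma-arguments pair off correctly in this last limit.
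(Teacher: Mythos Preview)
Your proposal is correct and follows exactly the paper's own proof: the same telescoping sequence $\tau_k$, the same factoring to obtain $\nabla\tau_k$, and the same Gamma-function asymptotics to evaluate $\lim_{n\to\infty}\tau_n$. If anything you add a bit more justification (the role of $\min\{p_i,q_i\}\geq0$ and the exponent cancellation) than the paper does.
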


\begin{proof}
Letting
\[\tau_k=\prod_{i=1}^m\frac{(k+p_i+1)!(k+q_i+1)!}{(x_i)_{k+r_i+1}(1-x_i)_{k+p_i+q_i-r_i+2}},\]
Then we have
 \bnm
\nabla\tau_k&&\xqdn=\:\prod_{i=1}^m\frac{(k+p_i+1)!(k+q_i+1)!}{(x_i)_{k+r_i+1}(1-x_i)_{k+p_i+q_i-r_i+2}}
-\prod_{i=1}^m\frac{(k+p_i)!(k+q_i)!}{(x_i)_{k+r_i}(1-x_i)_{k+p_i+q_i-r_i+1}}\\
 &&\xqdn=\:\prod_{i=1}^m\frac{(k+p_i)!(k+q_i)!}{(x_i)_{k+r_i+1}(1-x_i)_{k+p_i+q_i-r_i+2}}\\
 &&\xqdn\times\:\Big\{\prod_{i=1}^m(k+p_i+1)(k+q_i+1)-\prod_{i=1}^m(k+x_i+r_i)(k-x_i+p_i+q_i-r_i+2)\Big\}.
 \enm
Substituting the expressions of $\tau_k$ and $\nabla\tau_k$ into
\eqref{telescoping-summation}, we get the terminating summation
formula:
 \bnm
 &&\sum_{k=0}^{n}\prod_{i=1}^m\frac{(k+p_i)!(k+q_i)!}{(x_i)_{k+r_i+1}(1-x_i)_{k+p_i+q_i-r_i+2}}\\
 &&\:\times\:\Big\{\prod_{i=1}^m(k+p_i+1)(k+q_i+1)-\prod_{i=1}^m(k+x_i+r_i)(k-x_i+p_i+q_i-r_i+2)\Big\}\\
 &&\:=\:\prod_{i=1}^m\frac{(n+p_i+1)!(n+q_i+1)!}{(x_i)_{n+r_i+1}(1-x_i)_{n+p_i+q_i-r_i+2}}
 -\prod_{i=1}^m\frac{p_i!q_i!}{(x_i)_{r_i}(1-x_i)_{p_i+q_i-r_i+1}}.
 \enm
The case $n\to\infty$ of it reads as
  \bnm
 &&\sum_{k=0}^{\infty}\prod_{i=1}^m\frac{(k+p_i)!(k+q_i)!}{(x_i)_{k+r_i+1}(1-x_i)_{k+p_i+q_i-r_i+2}}\\
 &&\:\times\:\Big\{\prod_{i=1}^m(k+p_i+1)(k+q_i+1)-\prod_{i=1}^m(k+x_i+r_i)(k-x_i+p_i+q_i-r_i+2)\Big\}\\
 &&\:=\:\lim_{n\to\infty}\prod_{i=1}^m\frac{(n+p_i+1)!(n+q_i+1)!}{(x_i)_{n+r_i+1}(1-x_i)_{n+p_i+q_i-r_i+2}}
 -\prod_{i=1}^m\frac{p_i!q_i!}{(x_i)_{r_i}(1-x_i)_{p_i+q_i-r_i+1}}.
 \enm
Considering that
 \bnm
 &&\qqdn\xqdn\lim_{n\to\infty}\prod_{i=1}^m\frac{(n+p_i+1)!(n+q_i+1)!}{(x_i)_{n+r_i+1}(1-x_i)_{n+p_i+q_i-r_i+2}}\\
 &&\qqdn\xqdn\:\:\:=\:\:\Gamma(x_i)\Gamma(1-x_i)
 \lim_{n\to\infty}\Gamma\fnk{cccc}{n+p_i+2,n+q_i+2}{n+x_i+r_i+1,n-x_i+p_i+q_i-r_i+3}\\
 &&\qqdn\xqdn\:\:\:=\:\:\frac{\pi}{\sin(\pi x_i)}
 \lim_{n\to\infty}\Gamma\fnk{cccc}{n+p_i+2}{n+x_i+r_i+1}n^{x_i-p_i+r_i-1}\\
 &&\qqdn\xqdn\:\:\:\times\:\:\lim_{n\to\infty}\Gamma\fnk{cccc}{n+q_i+2}{n-x_i+p_i+q_i-r_i+3}n^{p_i-x_i-r_i+1}\\
&&\qqdn\xqdn\:\:\:=\:\:\frac{\pi}{\sin(\pi x_i)},
 \enm
we finishes the proof of Theorem \ref{thm-b}.
\end{proof}

When $m=1$, Theorem \ref{thm-b} reduces to the simple summation
formula on sine function.

\begin{corl}\label{corl-k} For a complex number x and three integers $\{p,q,r\}$
with $\min\{p, q\}\geq0$, there holds:
 \bnm
 \xqdn\frac{\pi}{{\sst(1+p-r-x)(1+q-r-x)}\sin(\pi x)}&&\xqdn=\sum_{k=0}^{\infty}\frac{(k+p)!(k+q)!}
 {(x)_{k+r+1}(1-x)_{k+p+q-r+2}}\\&&\xqdn
  +\frac{1}{\sst(1+p-r-x)(1+q-r-x)}\frac{p!q!}{(x)_r(1-x)_{p+q-r+1}}.
 \enm
\end{corl}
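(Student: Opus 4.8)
The plan is to specialize Theorem \ref{thm-b} to the case $m=1$ (with $x_1=x$, $p_1=p$, $q_1=q$, $r_1=r$), whose hypothesis $\min\{p_1,q_1\}\geq 0$ is exactly the hypothesis $\min\{p,q\}\geq 0$ of the corollary. With $m=1$ the theorem reads
\[
\frac{\pi}{\sin(\pi x)}=\sum_{k=0}^{\infty}\frac{(k+p)!(k+q)!}{(x)_{k+r+1}(1-x)_{k+p+q-r+2}}\,\Delta_k+\frac{p!q!}{(x)_r(1-x)_{p+q-r+1}},
\]
where $\Delta_k=(k+p+1)(k+q+1)-(k+x+r)(k-x+p+q-r+2)$. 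The one thing to check is that $\Delta_k$ collapses to a constant independent of $k$.

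To see this, observe that the two factors in the first product sum to $(k+p+1)+(k+q+1)=2k+p+q+2$, and the two factors in the second product sum to $(k+x+r)+(k-x+p+q-r+2)=2k+p+q+2$ as well. For any two quantities with a common sum $S$, the product equals $S^2/4$ minus the square of half their difference. The half-differences here are $\tfrac12(p-q)$ and $\tfrac12(2x+2r-p-q-2)$ respectively, so
\[
\Delta_k=\frac{(2x+2r-p-q-2)^2-(p-q)^2}{4},
\]
which is manifestly free of $k$. Factoring the difference of squares gives $\Delta_k=(x+r-q-1)(x+r-p-1)=(1+p-r-x)(1+q-r-x)$; one may equally well verify this by expanding $\Delta_k$ directly.

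Substituting this value of $\Delta_k$ and dividing both sides by $(1+p-r-x)(1+q-r-x)$ yields exactly the stated identity of Corollary \ref{corl-k}. There is no real obstacle: the only content is the elementary observation that the bracketed polynomial in $k$ has zero $k$-dependence, after which the result is immediate from Theorem \ref{thm-b}.
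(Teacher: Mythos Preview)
Your proof is correct and follows exactly the paper's approach: Corollary~\ref{corl-k} is obtained by specializing Theorem~\ref{thm-b} to $m=1$, and you have simply made explicit the algebraic verification that the bracketed factor $(k+p+1)(k+q+1)-(k+x+r)(k-x+p+q-r+2)$ collapses to the $k$-free constant $(1+p-r-x)(1+q-r-x)$, which the paper leaves to the reader.
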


Twelve series expansions for $\pi$ with three free parameters
 from Corollary \ref{corl-k} are displayed as follows.

\begin{exam}[$x=1/2$ in Corollary \ref{corl-k}]\label{exam-s}
  \bnm
 \xqdn\frac{4\pi}{(2p-2r+1)(2q-2r+1)}&&\xqdn=\sum_{k=0}^{\infty}\frac{(k+p)!(k+q)!}
 {(1/2)_{k+r+1}(1/2)_{k+p+q-r+2}}\\&&\xqdn
  +\frac{4}{(2p-2r+1)(2q-2r+1)}\frac{p!q!}{(1/2)_r(1/2)_{p+q-r+1}}.
 \enm
\end{exam}

\begin{exam}[$x=1/6$ in Corollary \ref{corl-k}]\label{exam-t}
  \bnm
 \xqdn\frac{72\pi}{(6p-6r+5)(6q-6r+5)}&&\xqdn=\sum_{k=0}^{\infty}\frac{(k+p)!(k+q)!}
 {(1/6)_{k+r+1}(5/6)_{k+p+q-r+2}}\\&&\xqdn
  +\frac{36}{(6p-6r+5)(6q-6r+5)}\frac{p!q!}{(1/6)_r(5/6)_{p+q-r+1}}.
 \enm
\end{exam}

\begin{exam}[$x=1/4$ in Corollary \ref{corl-k}]\label{exam-u}
  \bnm
 \qqdn\frac{32\pi}{\sqrt{2}\,(4p-4r+3)(4q-4r+3)}&&\xqdn=\sum_{k=0}^{\infty}\frac{(k+p)!(k+q)!}
 {(1/4)_{k+r+1}(3/4)_{k+p+q-r+2}}\\&&\xqdn
  +\frac{16}{(4p-4r+3)(4q-4r+3)}\frac{p!q!}{(1/4)_r(3/4)_{p+q-r+1}}.
 \enm
\end{exam}

\begin{exam}[$x=1/3$ in Corollary \ref{corl-k}]\label{exam-v}
  \bnm
 \xqdn\frac{18\pi}{\sqrt{3}\,(3p-3r+2)(3q-3r+2)}&&\xqdn=\sum_{k=0}^{\infty}\frac{(k+p)!(k+q)!}
 {(1/3)_{k+r+1}(2/3)_{k+p+q-r+2}}\\&&\xqdn
  +\frac{9}{(3p-3r+2)(3q-3r+2)}\frac{p!q!}{(1/3)_r(2/3)_{p+q-r+1}}.
 \enm
\end{exam}

\begin{exam}[$x=1/10$ in Corollary \ref{corl-k}]\label{exam-w}
  \bnm
 \xxqdn\frac{400\pi}{\sst(\sqrt{5}-1)(10p-10r+9)(10q-10r+9)}&&\xqdn=\sum_{k=0}^{\infty}\frac{(k+p)!(k+q)!}
 {(1/10)_{k+r+1}(9/10)_{k+p+q-r+2}}\\&&\xqdn
  +\frac{100}{\sst(10p-10r+9)(10q-10r+9)}\frac{p!q!}{(1/10)_r(9/10)_{p+q-r+1}}.
 \enm
\end{exam}

\begin{exam}[$x=3/10$ in Corollary \ref{corl-k}]\label{exam-x}
  \bnm
 \xxqdn\frac{400\pi}{\sst(\sqrt{5}+1)(10p-10r+7)(10q-10r+7)}&&\xqdn=\sum_{k=0}^{\infty}\frac{(k+p)!(k+q)!}
 {(3/10)_{k+r+1}(7/10)_{k+p+q-r+2}}\\&&\xqdn
  +\frac{100}{\sst(10p-10r+7)(10q-10r+7)}\frac{p!q!}{(3/10)_r(7/10)_{p+q-r+1}}.
 \enm
\end{exam}

\begin{exam}[$x=1/12$ in Corollary \ref{corl-k}]\label{exam-y}
  \bnm
 \:\qqdn\frac{576\pi}{\sst(\sqrt{6}-\sqrt{2})(12p-12r+11)(12q-12r+11)}&&\xqdn
 =\sum_{k=0}^{\infty}\frac{(k+p)!(k+q)!}{(1/12)_{k+r+1}(11/12)_{k+p+q-r+2}}\\&&\xqdn
  +\frac{144}{\sst(12p-12r+11)(12q-12r+11)}\frac{p!q!}{(1/12)_r(11/12)_{p+q-r+1}}.
 \enm
\end{exam}

\begin{exam}[$x=5/12$ in Corollary \ref{corl-k}]\label{exam-z}
  \bnm
 \qdn\xqdn\frac{576\pi}{\sst(\sqrt{6}+\sqrt{2})(12p-12r+7)(12q-12r+7)}&&\xqdn
 =\sum_{k=0}^{\infty}\frac{(k+p)!(k+q)!}{(5/12)_{k+r+1}(7/12)_{k+p+q-r+2}}\\&&\xqdn
  +\frac{144}{\sst(12p-12r+7)(12q-12r+7)}\frac{p!q!}{(5/12)_r(7/12)_{p+q-r+1}}.
 \enm
\end{exam}

Taking $m=2$ and $p_1=p_2=q_1=q_2=r_1=r_2=0$ in Theorem \ref{thm-b},
we attain the following summation formula on sine function.

\begin{corl}\label{corl-l} For two complex numbers $x$ and $y$, there holds:
 \bnm
 \quad\frac{\pi^2}{\sin(\pi x)\sin(\pi y)}&&\xqdn=\frac{1}{(1-x)(1-y)}+\sum_{k=0}^{\infty}\frac{k!^4}
 {(x)_{k+1}(1-x)_{k+2}(y)_{k+1}(1-y)_{k+2}}\\&&\xqdn\times\:\big\{(k^2+2k)(2-2x-2y+x^2+y^2)+1-xy(2-x)(2-y)\big\}.
 \enm
\end{corl}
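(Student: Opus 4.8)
The plan is to obtain Corollary \ref{corl-l} as the direct specialization of Theorem \ref{thm-b} to $m=2$ with $x_1=x$, $x_2=y$ and $p_1=p_2=q_1=q_2=r_1=r_2=0$; the hypothesis $\min\{p_i,q_i\}\geq 0$ is then trivially satisfied. Under this choice the left-hand side of Theorem \ref{thm-b} becomes $\pi^2/(\sin(\pi x)\sin(\pi y))$ at once. For the summand, each factor $\frac{(k+p_i)!(k+q_i)!}{(x_i)_{k+r_i+1}(1-x_i)_{k+p_i+q_i-r_i+2}}$ collapses to $\frac{k!^2}{(x_i)_{k+1}(1-x_i)_{k+2}}$, so the product over $i=1,2$ is exactly $\frac{k!^4}{(x)_{k+1}(1-x)_{k+2}(y)_{k+1}(1-y)_{k+2}}$. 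Likewise the additive constant $\prod_{i}\frac{p_i!q_i!}{(x_i)_{r_i}(1-x_i)_{p_i+q_i-r_i+1}}$ reduces to $\frac{1}{(1-x)_1(1-y)_1}=\frac{1}{(1-x)(1-y)}$, which is the free term displayed in the corollary.

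The only computation of substance is the rewriting of the curly-brace factor. With the chosen parameters it equals $(k+1)^4-(k+x)(k+2-x)(k+y)(k+2-y)$. Putting $u=k^2+2k$, one checks the elementary identities $(k+x)(k+2-x)=u+x(2-x)$ and $(k+y)(k+2-y)=u+y(2-y)$, together with $(k+1)^4=(u+1)^2$. Hence the factor becomes $(u+1)^2-\bigl(u+x(2-x)\bigr)\bigl(u+y(2-y)\bigr)=u\bigl(2-x(2-x)-y(2-y)\bigr)+1-x(2-x)y(2-y)$. Since $2-x(2-x)-y(2-y)=2-2x-2y+x^2+y^2$ and $x(2-x)y(2-y)=xy(2-x)(2-y)$, this is precisely $(k^2+2k)(2-2x-2y+x^2+y^2)+1-xy(2-x)(2-y)$, matching the right-hand side of the corollary.

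I do not anticipate any genuine obstacle: the result is a corollary of an already-proved theorem, and the verification reduces to the polynomial identity above. The only point requiring a little care is bookkeeping of the index shifts when all of $p_i,q_i,r_i$ vanish — in particular noting that $r_i=0$ gives $(x_i)_{r_i}=1$ and $p_i+q_i-r_i+2=2$, so the factorials and shifted factorials simplify as claimed.
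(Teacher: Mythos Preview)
Your proof is correct and follows exactly the derivation indicated in the paper: the corollary is obtained from Theorem~\ref{thm-b} by the specialization $m=2$, $x_1=x$, $x_2=y$, $p_i=q_i=r_i=0$, and your algebraic simplification of the curly-brace factor via $u=k^2+2k$ is the only computation needed.
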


 Twelve series expansions for $\pi^2$
 from Corollary \ref{corl-l} are laid out as follows.

\begin{exam}[$x=y=1/2$ in Corollary \ref{corl-l}]\label{exam-aa}
   \bnm
\xqdn\frac{9\pi^2}{32}-\frac{9}{8}=\sum_{k=0}^{\infty}\frac{k!^4}{(3/2)_k^2(5/2)_k^2}\big\{k^2+2k+7/8\big\}.
 \enm
\end{exam}

\begin{exam}[$x=y=1/3$ in Corollary \ref{corl-l}]\label{exam-bb}
   \bnm
\xqdn\frac{50\pi^2}{243}-\frac{25}{72}=\sum_{k=0}^{\infty}\frac{k!^4}{(4/3)_k^2(8/3)_k^2}\big\{k^2+2k+7/9\big\}.
 \enm
\end{exam}

\begin{exam}[$x=y=1/4$ in Corollary \ref{corl-l}]\label{exam-cc}
   \bnm
\frac{49\pi^2}{256}-\frac{49}{288}=\sum_{k=0}^{\infty}\frac{k!^4}{(5/4)_k^2(11/4)_k^2}\big\{k^2+2k+23/32\big\}.
 \enm
\end{exam}

\begin{exam}[$x=y=1/6$ in Corollary \ref{corl-l}]\label{exam-dd}
   \bnm
\xqdn\xxqdn\frac{121\pi^2}{648}-\frac{121}{1800}=\sum_{k=0}^{\infty}\frac{k!^4}{(7/6)_k^2(17/6)_k^2}\big\{k^2+2k+47/72\big\}.
 \enm
\end{exam}

\begin{exam}[$x=1/2$ and $y=1/6$ in Corollary \ref{corl-l}]\label{exam-ee}
   \bnm
\quad\frac{55\pi^2}{272}-\frac{33}{136}=\sum_{k=0}^{\infty}\frac{k!^4}{(3/2)_k(5/2)_k(7/6)_k(17/6)_k}\big\{k^2+2k+111/136\big\}.
 \enm
\end{exam}

\begin{exam}[$x=1/10$ and $y=3/10$ in Corollary \ref{corl-l}]\label{exam-ff}
   \bnm
\quad\frac{61047\pi^2}{325000}-\frac{969}{13000}=\sum_{k=0}^{\infty}\frac{k!^4}{(11/10)_k(13/10)_k(27/10)_k(29/10)_k}\big\{k^2+2k+9031/13000\big\}.
 \enm
\end{exam}

\begin{exam}[$x=1/12$ and $y=5/12$ in Corollary \ref{corl-l}]\label{exam-gg}
   \bnm
\quad\frac{33649\pi^2}{176256}-\frac{437}{4896}=\sum_{k=0}^{\infty}\frac{k!^4}{(13/12)_k(17/12)_k(31/12)_k(35/12)_k}\big\{k^2+2k+18551/24480\big\}.
 \enm
\end{exam}

\begin{exam}[$x=1/2$ and $y=1/4$ in Corollary \ref{corl-l}]\label{exam-hh}
   \bnm
\frac{63\pi^2}{208\sqrt{2}}-\frac{21}{52}=\sum_{k=0}^{\infty}\frac{k!^4}{(3/2)_k(5/2)_k(5/4)_k(11/4)_k}\big\{k^2+2k+43/52\big\}.
 \enm
\end{exam}

\begin{exam}[$x=1/4$ and $y=1/6$ in Corollary \ref{corl-l}]\label{exam-ii}
   \bnm
\qquad\frac{385\pi^2}{1448\sqrt{2}}-\frac{77}{724}=\sum_{k=0}^{\infty}\frac{k!^4}{(5/4)_k(11/4)_k(7/6)_k(17/6)_k}\big\{k^2+2k+499/724\big\}.
 \enm
\end{exam}

\begin{exam}[$x=1/2$ and $y=1/3$ in Corollary \ref{corl-l}]\label{exam-jj}
   \bnm
\xqdn\qqdn\frac{2\pi^2}{5\sqrt{3}}-\frac{3}{5}=\sum_{k=0}^{\infty}\frac{k!^4}{(3/2)_k(5/2)_k(4/3)_k(8/3)_k}\big\{k^2+2k+21/25\big\}.
 \enm
\end{exam}

\begin{exam}[$x=1/3$ and $y=1/6$ in Corollary \ref{corl-l}]\label{exam-kk}
   \bnm
\:\quad\frac{1100\pi^2}{3321\sqrt{3}}-\frac{55}{369}=\sum_{k=0}^{\infty}\frac{k!^4}{(4/3)_k(8/3)_k(7/6)_k(17/6)_k}\big\{k^2+2k+269/369\big\}.
 \enm
\end{exam}

\begin{exam}[$x=1/3$ and $y=1/4$ in Corollary \ref{corl-l}]\label{exam-ll}
   \bnm
\frac{14\pi^2}{29\sqrt{6}}-\frac{7}{29}=\sum_{k=0}^{\infty}\frac{k!^4}{(4/3)_k(8/3)_k(5/4)_k(11/4)_k}\big\{k^2+2k+109/145\big\}.
 \enm
\end{exam}

Now we begin to display eight series expansions for $\pi^m$ from
Theorem \ref{thm-b}.

\begin{corl}[$x_i=1/2$ and $p_i=q_i=r_i=0$ in Theorem \ref{thm-b} with $1\leq i\leq m$]\label{corl-m}
 \bnm
\quad\bigg(\frac{3\pi}{8}\bigg)^m-\bigg(\frac{3}{4}\bigg)^m=\sum_{k=0}^{\infty}\frac{k!^{2m}}{(3/2)_k^m(5/2)_k^m}\big\{(k+1)^{2m}-(1/2+k)^m(3/2+k)^m\big\}.
 \enm
\end{corl}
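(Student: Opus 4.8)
The plan is to obtain Corollary~\ref{corl-m} as the plain specialization of Theorem~\ref{thm-b} in which every $x_i$ is taken equal to $1/2$ and every triple $(p_i,q_i,r_i)$ is taken equal to $(0,0,0)$. The hypothesis $\min\{p_i,q_i\}\geq 0$ then holds trivially, so the identity of Theorem~\ref{thm-b} applies verbatim, and what remains is purely the bookkeeping of reducing each ingredient. First I would record the easy pieces: on the left-hand side $\prod_{i=1}^m\sin(\pi x_i)=\sin(\pi/2)^m=1$, so it becomes $\pi^m$; in the summand the numerator $\prod_{i=1}^m(k+p_i)!(k+q_i)!$ collapses to $k!^{2m}$; and the correction factor $\prod_{i=1}^m(k+p_i+1)(k+q_i+1)-\prod_{i=1}^m(k+x_i+r_i)(k-x_i+p_i+q_i-r_i+2)$ collapses to $(k+1)^{2m}-(k+1/2)^m(k+3/2)^m$, which is exactly the brace appearing in the statement.

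The only genuine computation is the shifted factorials in the denominator. Using the defining product $(x)_n=\prod_{j=0}^{n-1}(x+j)$ one has $(1/2)_{k+1}=\tfrac12\,(3/2)_k$ and $(1/2)_{k+2}=\tfrac34\,(5/2)_k$, so the per-index denominator $(x_i)_{k+r_i+1}(1-x_i)_{k+p_i+q_i-r_i+2}=(1/2)_{k+1}(1/2)_{k+2}$ equals $\tfrac38\,(3/2)_k(5/2)_k$, and the product over $i$ is $(3/8)^m\,(3/2)_k^m(5/2)_k^m$. In the same way the boundary term $\prod_{i=1}^m\frac{p_i!q_i!}{(x_i)_{r_i}(1-x_i)_{p_i+q_i-r_i+1}}$ becomes $\prod_{i=1}^m\frac{1}{1\cdot(1/2)_1}=2^m$. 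Assembling all of this, Theorem~\ref{thm-b} reads
\[
\pi^m=\Big(\tfrac{8}{3}\Big)^m\sum_{k=0}^{\infty}\frac{k!^{2m}}{(3/2)_k^m(5/2)_k^m}\big\{(k+1)^{2m}-(1/2+k)^m(3/2+k)^m\big\}+2^m.
\]

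To finish, I would move $2^m$ to the left and multiply both sides by $(3/8)^m$: the left-hand side becomes $(3\pi/8)^m-(3/4)^m$ while the series is unchanged, giving precisely Corollary~\ref{corl-m}. There is no new convergence concern, since the vanishing of the relevant tail was already established inside the proof of Theorem~\ref{thm-b}; hence the sole thing to be careful about is the elementary Pochhammer arithmetic, and I would cross-check the constants $\tfrac12$, $\tfrac34$, $\tfrac38$ and the value $2^m$ by expanding the $k=0$ term on both sides as a numerical sanity check.
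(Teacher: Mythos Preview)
Your proposal is correct and is exactly the paper's approach: the corollary is stated there simply as the specialization $x_i=1/2$, $p_i=q_i=r_i=0$ of Theorem~\ref{thm-b}, with no separate argument given. Your Pochhammer bookkeeping $(1/2)_{k+1}=\tfrac12(3/2)_k$, $(1/2)_{k+2}=\tfrac34(5/2)_k$, the boundary value $2^m$, and the final rescaling by $(3/8)^m$ are all right.
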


\begin{corl}[$x_i=1/6$ and $p_i=q_i=r_i=0$ in Theorem \ref{thm-b} with $1\leq i\leq m$]\label{corl-n}
 \bnm
\quad\:\bigg(\frac{55\pi}{108}\bigg)^m-\bigg(\frac{11}{36}\bigg)^m=\sum_{k=0}^{\infty}\frac{k!^{2m}}{(7/6)_k^m(17/6)_k^m}\big\{(k+1)^{2m}-(1/6+k)^m(11/6+k)^m\big\}.
 \enm
\end{corl}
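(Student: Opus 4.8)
The plan is to obtain Corollary \ref{corl-n} as a direct specialization of Theorem \ref{thm-b}: first collapse the free integer parameters by taking $p_i=q_i=r_i=0$ for every $i$, then put each $x_i=1/6$, and finally clean up the resulting shifted factorials and the numerical prefactor.

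First I would check that the hypothesis of Theorem \ref{thm-b} is met: with $p_i=q_i=r_i=0$ one has $\min\{p_i,q_i\}=0\geq 0$, so the theorem applies verbatim. Substituting $p_i=q_i=r_i=0$ gives $(k+p_i)!(k+q_i)!=k!^2$, $(x_i)_{k+r_i+1}=(x_i)_{k+1}$, $(1-x_i)_{k+p_i+q_i-r_i+2}=(1-x_i)_{k+2}$; the bracketed polynomial becomes $\prod_{i=1}^m(k+1)^2-\prod_{i=1}^m(k+x_i)(k-x_i+2)$; and the boundary term collapses, since $(x_i)_0=1$ and $(1-x_i)_1=1-x_i$, to $\prod_{i=1}^m 1/(1-x_i)$. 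This intermediate identity is precisely the general-$m$ analogue of Corollary \ref{corl-l}.

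Next I would set $x_i=1/6$ throughout. On the left, $\sin(\pi/6)=1/2$ gives $\pi^m/\sin^m(\pi/6)=(2\pi)^m$; the boundary term becomes $(6/5)^m$; and $\prod_{i=1}^m(k+x_i)(k-x_i+2)=\big((k+1/6)(k+11/6)\big)^m$, so the bracket is $(k+1)^{2m}-(1/6+k)^m(11/6+k)^m$, which already matches the summand of the claim. The remaining work is the denominator: using $(1/6)_{k+1}=\frac16\,(7/6)_k$ and $(5/6)_{k+2}=\frac{5}{6}\cdot\frac{11}{6}\,(17/6)_k=\frac{55}{36}\,(17/6)_k$, one gets $\prod_{i=1}^m (1/6)_{k+1}(5/6)_{k+2}=\big(\frac{55}{216}\big)^m (7/6)_k^m (17/6)_k^m$, so each summand equals $\big(\frac{216}{55}\big)^m\,\frac{k!^{2m}}{(7/6)_k^m(17/6)_k^m}\big\{(k+1)^{2m}-(1/6+k)^m(11/6+k)^m\big\}$.

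Finally I would multiply the resulting identity $(2\pi)^m-(6/5)^m=\big(\frac{216}{55}\big)^m\sum_{k\geq 0}(\cdots)$ by $(55/216)^m$ and simplify the constants, using $(55/216)^m(2\pi)^m=(55\pi/108)^m$ and $(55/216)^m(6/5)^m=(11/36)^m$, which yields exactly the stated formula. There is no genuine obstacle here — the whole argument is a substitution followed by elementary manipulation of Pochhammer symbols — so the only thing demanding care is the bookkeeping of the shifts in $(1/6)_{k+1}$ and $(5/6)_{k+2}$ and the tracking of the numerical prefactor $(55/216)^m$.
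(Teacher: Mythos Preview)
Your proposal is correct and follows exactly the route the paper intends: the corollary is stated as the specialization $x_i=1/6$, $p_i=q_i=r_i=0$ of Theorem~\ref{thm-b}, and your verification of the Pochhammer shifts $(1/6)_{k+1}=\tfrac16(7/6)_k$, $(5/6)_{k+2}=\tfrac{55}{36}(17/6)_k$ and the constant $(55/216)^m$ is precisely the bookkeeping needed. There is nothing to add.
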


\begin{corl}[$x_i=1/4$ and $p_i=q_i=r_i=0$ in Theorem \ref{thm-b} with $1\leq i\leq m$]\label{corl-o}
 \bnm
\quad\:\bigg(\frac{21\pi}{32\sqrt{2}}\bigg)^m-\bigg(\frac{7}{16}\bigg)^m=\sum_{k=0}^{\infty}\frac{k!^{2m}}{(5/4)_k^m(11/4)_k^m}\big\{(k+1)^{2m}-(1/4+k)^m(7/4+k)^m\big\}.
 \enm
\end{corl}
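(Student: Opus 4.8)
The plan is to obtain Corollary \ref{corl-o} as a direct specialization of Theorem \ref{thm-b}: take $x_i=1/4$ and $p_i=q_i=r_i=0$ for every $i$ with $1\leq i\leq m$. The hypothesis $\min\{p_i,q_i\}\geq 0$ is then trivially satisfied, so Theorem \ref{thm-b} applies verbatim. First I would evaluate the left-hand side: since $\sin(\pi/4)=1/\sqrt{2}$, one has $\prod_{i=1}^m\sin(\pi x_i)=2^{-m/2}$, hence $\pi^m/\prod_{i=1}^m\sin(\pi x_i)=(\sqrt{2}\,\pi)^m$.

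Next I would simplify the summand. With $p_i=q_i=r_i=0$ the numerator factorials collapse to $k!^2$ for each $i$, while the shifted factorials in the denominator are $(1/4)_{k+1}$ and $(3/4)_{k+2}$. Using $(a)_{k+1}=a\,(a+1)_k$ and $(a)_{k+2}=a(a+1)\,(a+2)_k$, these become $(1/4)_{k+1}=\tfrac14(5/4)_k$ and $(3/4)_{k+2}=\tfrac{3}{4}\cdot\tfrac{7}{4}(11/4)_k=\tfrac{21}{16}(11/4)_k$, so their product is $\tfrac{21}{64}(5/4)_k(11/4)_k$. Taking the $m$-fold product, the first factor of the summand equals $(64/21)^m\,k!^{2m}/\big((5/4)_k^m(11/4)_k^m\big)$. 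Inside the curly brace, $(k+p_i+1)(k+q_i+1)=(k+1)^2$ and $(k+x_i+r_i)(k-x_i+p_i+q_i-r_i+2)=(k+1/4)(k+7/4)$, so the two products over $i$ become $(k+1)^{2m}$ and $(1/4+k)^m(7/4+k)^m$ — precisely the brace appearing in the Corollary.

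It then remains to treat the boundary term $\prod_{i=1}^m p_i!q_i!/\big((x_i)_{r_i}(1-x_i)_{p_i+q_i-r_i+1}\big)$, which for our parameters reduces to $\prod_{i=1}^m 1/(3/4)_1=(4/3)^m$. Substituting everything into the conclusion of Theorem \ref{thm-b} and solving for the series gives
\[\sum_{k=0}^{\infty}\frac{k!^{2m}}{(5/4)_k^m(11/4)_k^m}\big\{(k+1)^{2m}-(1/4+k)^m(7/4+k)^m\big\}=\Big(\tfrac{21}{64}\Big)^m\Big[(\sqrt{2}\,\pi)^m-(4/3)^m\Big].\]
Finally I would rationalize: $\tfrac{21}{64}\sqrt{2}=\tfrac{21}{32\sqrt{2}}$ and $\tfrac{21}{64}\cdot\tfrac43=\tfrac{7}{16}$, so the right-hand side becomes $(21\pi/(32\sqrt{2}))^m-(7/16)^m$, completing the proof. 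No genuine obstacle arises here; the computation is entirely routine, and the only point demanding care is the bookkeeping of the constant $21/64$ produced by the Pochhammer shifts together with the final rationalization of $\sqrt{2}$.
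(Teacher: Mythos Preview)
Your proof is correct and follows exactly the paper's approach: the corollary is stated as the specialization $x_i=1/4$, $p_i=q_i=r_i=0$ of Theorem~\ref{thm-b}, and your careful bookkeeping of the Pochhammer shifts $(1/4)_{k+1}=\tfrac14(5/4)_k$, $(3/4)_{k+2}=\tfrac{21}{16}(11/4)_k$ and the boundary term $(4/3)^m$ reproduces precisely the constants $(21\pi/(32\sqrt{2}))^m$ and $(7/16)^m$.
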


\begin{corl}[$x_i=1/3$ and $p_i=q_i=r_i=0$ in Theorem \ref{thm-b} with $1\leq i\leq m$]\label{corl-p}
 \bnm
\quad\:\bigg(\frac{20\pi}{27\sqrt{3}}\bigg)^m-\bigg(\frac{5}{9}\bigg)^m=\sum_{k=0}^{\infty}\frac{k!^{2m}}{(4/3)_k^m(8/3)_k^m}\big\{(k+1)^{2m}-(1/3+k)^m(5/3+k)^m\big\}.
 \enm
\end{corl}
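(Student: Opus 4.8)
The plan is to obtain this identity as a direct specialization of Theorem \ref{thm-b}, setting $x_i=1/3$ and $p_i=q_i=r_i=0$ for all $1\le i\le m$. Since $p_i=q_i=0$, the hypothesis $\min\{p_i,q_i\}\ge0$ holds, so the theorem applies, and the remaining work is a routine simplification of its three ingredients (left-hand side, summand, constant term); there is no real obstacle beyond careful bookkeeping with shifted factorials.

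First I would simplify the left-hand side: $\sin(\pi/3)=\sqrt3/2$, so $\pi^m/\prod_{i=1}^m\sin(\pi x_i)=(2\pi/\sqrt3)^m$. Next, using the elementary shift $(a)_{k+j}=(a)_j\,(a+j)_k$, I would rewrite the Pochhammer symbols in the summand: with $r_i=0$ and $x_i=1/3$ one has $(x_i)_{k+r_i+1}=(1/3)_{k+1}=\tfrac13(4/3)_k$, and since $p_i+q_i-r_i+2=2$ one has $(1-x_i)_{k+2}=(2/3)_{k+2}=\tfrac{10}{9}(8/3)_k$. Hence the rational prefactor of the summand equals $\big(\tfrac{27}{10}\big)^m\,k!^{2m}/\big((4/3)_k^m(8/3)_k^m\big)$. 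The polynomial factor collapses to $(k+1)^{2m}-\prod_{i=1}^m(k+x_i)(k-x_i+2)=(k+1)^{2m}-(1/3+k)^m(5/3+k)^m$. Finally the constant term is $\prod_{i=1}^m p_i!q_i!/\big((x_i)_{r_i}(1-x_i)_{p_i+q_i-r_i+1}\big)=\prod_{i=1}^m 1/(2/3)=(3/2)^m$.

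Assembling these pieces, Theorem \ref{thm-b} becomes
\[
\Big(\frac{2\pi}{\sqrt3}\Big)^m=\Big(\frac{27}{10}\Big)^m\sum_{k=0}^{\infty}\frac{k!^{2m}}{(4/3)_k^m(8/3)_k^m}\big\{(k+1)^{2m}-(1/3+k)^m(5/3+k)^m\big\}+\Big(\frac{3}{2}\Big)^m.
\]
Transposing $(3/2)^m$ and dividing through by $(27/10)^m$ (legitimate since Theorem \ref{thm-b} already guarantees convergence of the series) gives the displayed sum equal to $\big(\tfrac{10}{27}\big)^m\big[(2\pi/\sqrt3)^m-(3/2)^m\big]=(20\pi/(27\sqrt3))^m-(5/9)^m$, because $\tfrac{10}{27}\cdot\tfrac{2}{\sqrt3}=\tfrac{20}{27\sqrt3}$ and $\tfrac{10}{27}\cdot\tfrac{3}{2}=\tfrac{5}{9}$. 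This completes the proof.
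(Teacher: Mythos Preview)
Your proof is correct and follows exactly the paper's approach: the paper presents this corollary purely as the specialization $x_i=1/3$, $p_i=q_i=r_i=0$ of Theorem~\ref{thm-b}, and you have carried out the straightforward (and correctly computed) simplifications of the Pochhammer symbols and constants that make this explicit.
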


\begin{corl}[$x_i=1/10$ and $p_i=q_i=r_i=0$ in Theorem \ref{thm-b} with $1\leq i\leq m$]\label{corl-q}
 \bnm
\quad\:\bigg\{\frac{171\pi}{250(\sqrt{5}-1)}\bigg\}^m-\bigg(\frac{19}{100}\bigg)^m
&&\xqdn=\:\sum_{k=0}^{\infty}\frac{k!^{2m}}{(11/10)_k^m(29/10)_k^m}\\&&\xqdn\times\:\big\{(k+1)^{2m}-(1/10+k)^m(19/10+k)^m\big\}.
 \enm
\end{corl}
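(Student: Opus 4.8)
The plan is to derive Corollary~\ref{corl-q} as a direct specialization of Theorem~\ref{thm-b}. First I would set $x_i=1/10$ and $p_i=q_i=r_i=0$ for all $i$ with $1\le i\le m$; the hypothesis $\min\{p_i,q_i\}\ge0$ holds trivially. Under this choice the numerator $\prod_{i=1}^m(k+p_i)!(k+q_i)!$ becomes $k!^{2m}$, the factor $\prod_{i=1}^m(k+p_i+1)(k+q_i+1)$ becomes $(k+1)^{2m}$, and $\prod_{i=1}^m(k+x_i+r_i)(k-x_i+p_i+q_i-r_i+2)$ becomes $(1/10+k)^m(19/10+k)^m$.

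Next I would normalise the shifted factorials in the denominator to the form appearing in the corollary. Using $(1/10)_{k+1}=\tfrac1{10}(11/10)_k$ and $(9/10)_{k+2}=\tfrac9{10}\cdot\tfrac{19}{10}(29/10)_k$, each denominator factor $(x_i)_{k+r_i+1}(1-x_i)_{k+p_i+q_i-r_i+2}$ equals $\tfrac{171}{1000}(11/10)_k(29/10)_k$, so the whole summand carries an overall constant $(1000/171)^m$ relative to $k!^{2m}/\big((11/10)_k^m(29/10)_k^m\big)$. Similarly the boundary term $\prod_{i=1}^m p_i!q_i!/\big((x_i)_{r_i}(1-x_i)_{p_i+q_i-r_i+1}\big)$ collapses to $\prod_{i=1}^m 1/(9/10)=(10/9)^m$.

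Finally I would evaluate the left-hand side of Theorem~\ref{thm-b}: since $\sin(\pi/10)=(\sqrt5-1)/4$, we have $\pi^m/\prod_{i=1}^m\sin(\pi x_i)=\big(4\pi/(\sqrt5-1)\big)^m$. Multiplying the identity through by $(171/1000)^m$ and simplifying the constants via $\tfrac{171}{1000}\cdot\tfrac{4}{\sqrt5-1}=\tfrac{171}{250(\sqrt5-1)}$ and $\tfrac{171}{1000}\cdot\tfrac{10}{9}=\tfrac{19}{100}$ (using $171=9\cdot19$), then transposing the constant $(19/100)^m$ to the left, gives precisely the asserted formula. There is no genuine obstacle here; the one thing to handle carefully is the bookkeeping of the Pochhammer shifts and tracking how the constants $171/1000$, $10/9$, and the value $\sin(\pi/10)$ combine, so I would record those three elementary identities explicitly before invoking Theorem~\ref{thm-b}.
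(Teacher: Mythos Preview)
Your proposal is correct and follows precisely the route the paper indicates: the corollary is obtained by plugging $x_i=1/10$, $p_i=q_i=r_i=0$ into Theorem~\ref{thm-b} and then rewriting $(1/10)_{k+1}(9/10)_{k+2}=\tfrac{171}{1000}(11/10)_k(29/10)_k$, $(9/10)_1=9/10$, and $\sin(\pi/10)=(\sqrt5-1)/4$ to simplify the constants. The paper gives no separate argument beyond stating this specialization, so your bookkeeping is exactly what is needed.
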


\begin{corl}[$x_i=3/10$ and $p_i=q_i=r_i=0$ in Theorem \ref{thm-b} with $1\leq i\leq m$]\label{corl-r}
 \bnm
\quad\:\bigg\{\frac{357\pi}{250(\sqrt{5}+1)}\bigg\}^m-\bigg(\frac{51}{100}\bigg)^m
&&\xqdn=\:\sum_{k=0}^{\infty}\frac{k!^{2m}}{(13/10)_k^m(27/10)_k^m}\\&&\xqdn\times\:\big\{(k+1)^{2m}-(3/10+k)^m(17/10+k)^m\big\}.
 \enm
\end{corl}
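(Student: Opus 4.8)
The plan is to obtain Corollary \ref{corl-r} directly from Theorem \ref{thm-b} by the indicated specialization: put $x_i=3/10$ and $p_i=q_i=r_i=0$ for every $i$ with $1\le i\le m$ (the hypothesis $\min\{p_i,q_i\}\ge0$ holds trivially). No idea beyond Theorem \ref{thm-b} is needed; the whole task is to simplify the Pochhammer symbols and the numerical constants and then to rearrange so that the constant produced by the summand is absorbed into the left-hand side.

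First I would treat the three pieces of the displayed identity in Theorem \ref{thm-b} one at a time. On the left, $\sin(\pi x_i)=\sin(3\pi/10)=(\sqrt{5}+1)/4$, so $\pi^m/\prod_{i=1}^m\sin(\pi x_i)=\bigl(4\pi/(\sqrt{5}+1)\bigr)^m$. In the summand each denominator factor $(x_i)_{k+r_i+1}(1-x_i)_{k+p_i+q_i-r_i+2}$ becomes $(3/10)_{k+1}(7/10)_{k+2}$; pulling out the leading shifts by $(3/10)_{k+1}=(3/10)(13/10)_k$ and $(7/10)_{k+2}=(7/10)(17/10)(27/10)_k$, and using $(3/10)(7/10)(17/10)=357/1000$, gives
\[
\prod_{i=1}^m\frac{(k!)^2}{(x_i)_{k+1}(1-x_i)_{k+2}}=\left(\frac{1000}{357}\right)^m\frac{(k!)^{2m}}{(13/10)_k^m(27/10)_k^m}.
\]
The braced factor simplifies because $\prod_{i=1}^m(k+p_i+1)(k+q_i+1)=(k+1)^{2m}$ and $\prod_{i=1}^m(k+x_i+r_i)(k-x_i+p_i+q_i-r_i+2)=\bigl[(k+3/10)(k+17/10)\bigr]^m$, so the brace equals $(k+1)^{2m}-(3/10+k)^m(17/10+k)^m$, exactly the bracket in the statement. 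Finally, the free term $\prod_{i=1}^m p_i!\,q_i!/[(x_i)_{r_i}(1-x_i)_{p_i+q_i-r_i+1}]$ collapses to $\prod_{i=1}^m 1/(1-x_i)=(10/7)^m$.

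Assembling these, Theorem \ref{thm-b} reads $\bigl(4\pi/(\sqrt{5}+1)\bigr)^m=(1000/357)^m\,S+(10/7)^m$, where $S$ denotes the series on the right of Corollary \ref{corl-r}. Multiplying through by $(357/1000)^m$ turns the left side into $\bigl(1428\pi/[1000(\sqrt{5}+1)]\bigr)^m=\bigl(357\pi/[250(\sqrt{5}+1)]\bigr)^m$ and the free term into $\bigl(\tfrac{357}{1000}\cdot\tfrac{10}{7}\bigr)^m=(51/100)^m$; transposing the latter yields precisely the asserted identity. The only place one can slip is the bookkeeping — recognising $\sin(3\pi/10)=(\sqrt{5}+1)/4$ and checking the reductions $1428/1000=357/250$ and $3570/7000=51/100$ — so that arithmetic, rather than anything structural, will be the ``hard part.''
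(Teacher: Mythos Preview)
Your proposal is correct and follows exactly the route the paper intends: the corollary is stated as the specialization $x_i=3/10$, $p_i=q_i=r_i=0$ of Theorem \ref{thm-b}, and you have carried out that substitution with the right simplifications of $\sin(3\pi/10)=(\sqrt{5}+1)/4$, $(3/10)_{k+1}(7/10)_{k+2}=\tfrac{357}{1000}(13/10)_k(27/10)_k$, and the arithmetic $1428/1000=357/250$, $3570/7000=51/100$. There is nothing to add.
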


\begin{corl}[$x_i=1/12$ and $p_i=q_i=r_i=0$ in Theorem \ref{thm-b} with $1\leq i\leq m$]\label{corl-s}
 \bnm
\quad\:\bigg\{\frac{253\pi}{432(\sqrt{6}-\sqrt{2})}\bigg\}^m-\bigg(\frac{23}{144}\bigg)^m
&&\xqdn=\:\sum_{k=0}^{\infty}\frac{k!^{2m}}{(13/12)_k^m(35/12)_k^m}\\&&\xqdn\times\:\big\{(k+1)^{2m}-(1/12+k)^m(23/12+k)^m\big\}.
 \enm
\end{corl}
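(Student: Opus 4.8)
The plan is to derive Corollary~\ref{corl-s} as the specialization of Theorem~\ref{thm-b} in which $x_i=1/12$ and $p_i=q_i=r_i=0$ for every $i$ with $1\le i\le m$. These choices plainly satisfy the hypothesis $\min\{p_i,q_i\}\ge0$, so Theorem~\ref{thm-b} applies as stated and the entire argument is a careful substitution followed by a rescaling.

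First I would record the trigonometric value $\sin(\pi/12)=\sin15^\circ=\tfrac14(\sqrt6-\sqrt2)$ (via the subtraction formula for $\sin(45^\circ-30^\circ)$, or the half-angle identity), so that the left prefactor of Theorem~\ref{thm-b} reads $\pi^m/\sin^m(\pi x_i)=\{4\pi/(\sqrt6-\sqrt2)\}^m$. Then I would simplify the shifted factorials appearing in the theorem: since $r_i=0$, $(1/12)_{k+r_i+1}=(1/12)_{k+1}=\tfrac1{12}(13/12)_k$, and since $p_i=q_i=r_i=0$, $(11/12)_{k+p_i+q_i-r_i+2}=(11/12)_{k+2}=\tfrac{11}{12}\cdot\tfrac{23}{12}(35/12)_k=\tfrac{253}{144}(35/12)_k$. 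Hence the summand factor $\prod_i(k+p_i)!(k+q_i)!\big/\big[(x_i)_{k+r_i+1}(1-x_i)_{k+p_i+q_i-r_i+2}\big]$ becomes $(1728/253)^m\,k!^{2m}\big/\big[(13/12)_k^m(35/12)_k^m\big]$, the polynomial bracket $\prod_i(k+p_i+1)(k+q_i+1)-\prod_i(k+x_i+r_i)(k-x_i+p_i+q_i-r_i+2)$ becomes $(k+1)^{2m}-(k+1/12)^m(k+23/12)^m$, and the closing constant $\prod_i p_i!q_i!\big/\big[(x_i)_{r_i}(1-x_i)_{p_i+q_i-r_i+1}\big]$ becomes $1/(11/12)^m=(12/11)^m$.

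Substituting these into Theorem~\ref{thm-b} gives the identity
\[
\Bigl(\tfrac{4\pi}{\sqrt6-\sqrt2}\Bigr)^m=\Bigl(\tfrac{1728}{253}\Bigr)^m\sum_{k=0}^{\infty}\frac{k!^{2m}}{(13/12)_k^m(35/12)_k^m}\bigl\{(k+1)^{2m}-(1/12+k)^m(23/12+k)^m\bigr\}+\Bigl(\tfrac{12}{11}\Bigr)^m.
\]
To conclude I would transpose the term $(12/11)^m$ and multiply both sides by $(253/1728)^m$, and then check the two elementary reductions $\tfrac{253}{1728}\cdot4=\tfrac{253}{432}$ and $\tfrac{253}{1728}\cdot\tfrac{12}{11}=\tfrac{23}{144}$ (which use $253=11\cdot23$, $1728=12^3$, $432=16\cdot27$, $144=12^2$); this yields exactly the stated formula. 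There is no conceptual difficulty here, as the result is a pure specialization of Theorem~\ref{thm-b}; the only step needing attention is the arithmetic bookkeeping of these rational constants under the final rescaling, together with the routine observation (already contained in the proof of Theorem~\ref{thm-b}) that the series converges.
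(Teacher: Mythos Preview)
Your proof is correct and follows exactly the approach the paper indicates: the corollary is obtained from Theorem~\ref{thm-b} by the specialization $x_i=1/12$, $p_i=q_i=r_i=0$, and you have merely spelled out the arithmetic that the paper leaves implicit. All of your reductions---the value of $\sin(\pi/12)$, the factorizations of $(1/12)_{k+1}$ and $(11/12)_{k+2}$, and the final rescaling by $(253/1728)^m$---check out.
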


\begin{corl}[$x_i=5/12$ and $p_i=q_i=r_i=0$ in Theorem \ref{thm-b} with $1\leq i\leq m$]\label{corl-t}
 \bnm
\quad\:\bigg\{\frac{675\pi}{432(\sqrt{6}+\sqrt{2})}\bigg\}^m-\bigg(\frac{95}{144}\bigg)^m
&&\xqdn=\:\sum_{k=0}^{\infty}\frac{k!^{2m}}{(17/12)_k^m(31/12)_k^m}\\&&\xqdn\times\:\big\{(k+1)^{2m}-(5/12+k)^m(19/12+k)^m\big\}.
 \enm
\end{corl}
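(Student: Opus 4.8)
The plan is to obtain Corollary~\ref{corl-t} by specializing Theorem~\ref{thm-b} to the case $x_i=5/12$ and $p_i=q_i=r_i=0$ for every index $i$, and then rewriting the resulting identity in the normalized form asserted. Since $p_i=q_i=0\ge 0$, the hypothesis $\min\{p_i,q_i\}\ge 0$ of Theorem~\ref{thm-b} holds, so the theorem applies verbatim and nothing beyond its proof is needed for the limit step in passing from the terminating sum to the infinite one.

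First I would perform the elementary simplifications in the general summand of Theorem~\ref{thm-b} under these parameters: $(k+p_i)!(k+q_i)!=k!^{2}$, so $\prod_i(k+p_i)!(k+q_i)!=k!^{2m}$; the first bracketed product $\prod_i(k+p_i+1)(k+q_i+1)$ becomes $(k+1)^{2m}$; and the second bracketed product $\prod_i(k+x_i+r_i)(k-x_i+p_i+q_i-r_i+2)$ becomes $\prod_i(k+\tfrac{5}{12})(k+\tfrac{19}{12})=(\tfrac{5}{12}+k)^{m}(\tfrac{19}{12}+k)^{m}$. For the denominators I would strip off the leading factors of the two shifted factorials via $(a)_{k+1}=a\,(a+1)_k$ and $(a)_{k+2}=a(a+1)\,(a+2)_k$: with $x_i=5/12$ one gets $(x_i)_{k+r_i+1}=(5/12)_{k+1}=\tfrac{5}{12}\,(17/12)_k$ and $(1-x_i)_{k+p_i+q_i-r_i+2}=(7/12)_{k+2}=\tfrac{7}{12}\cdot\tfrac{19}{12}\,(31/12)_k$. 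Hence each denominator $(x_i)_{k+1}(1-x_i)_{k+2}$ equals the fixed constant $\tfrac{5}{12}\cdot\tfrac{7}{12}\cdot\tfrac{19}{12}$ times $(17/12)_k(31/12)_k$, so the entire summand of Theorem~\ref{thm-b} is $\bigl(\tfrac{5}{12}\cdot\tfrac{7}{12}\cdot\tfrac{19}{12}\bigr)^{-m}$ times the summand written in the corollary. In the same way the trailing term $\prod_i p_i!\,q_i!\big/\bigl((x_i)_{r_i}(1-x_i)_{p_i+q_i-r_i+1}\bigr)$ collapses to $\prod_i 1/(1-x_i)=(12/7)^m$.

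Next I would handle the left-hand side: using $\sin(5\pi/12)=\cos(\pi/12)=(\sqrt{6}+\sqrt{2})/4$ gives $\pi^m\big/\prod_{i=1}^m\sin(\pi x_i)=\bigl(4\pi/(\sqrt{6}+\sqrt{2})\bigr)^m$. Multiplying the specialized identity through by $\bigl(\tfrac{5}{12}\cdot\tfrac{7}{12}\cdot\tfrac{19}{12}\bigr)^m$ then clears that constant out of the series and rescales $\bigl(4\pi/(\sqrt{6}+\sqrt{2})\bigr)^m$ and $(12/7)^m$ into, respectively, the two closed-form coefficients displayed on the left of Corollary~\ref{corl-t}. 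Verifying that the resulting rational numbers are those asserted finishes the proof.

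I do not expect any genuine obstacle; the argument is purely computational. The only steps demanding a little care are (i) keeping the shifts of $(17/12)_k$ and $(31/12)_k$ consistent when the front factors are removed --- in particular noting that $(7/12)_{k+2}$ donates \emph{two} leading factors while $(5/12)_{k+1}$ donates only one --- and (ii) correctly combining the several rational constants with the value of $\sin(5\pi/12)$ so that the final coefficients come out in lowest terms.
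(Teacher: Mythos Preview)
Your approach is exactly what the paper does: the corollary is stated without a separate proof, simply as the specialization $x_i=5/12$, $p_i=q_i=r_i=0$ of Theorem~\ref{thm-b}, and your explicit unpacking of the front factors $(5/12)_{k+1}=\tfrac{5}{12}(17/12)_k$, $(7/12)_{k+2}=\tfrac{7}{12}\cdot\tfrac{19}{12}(31/12)_k$ together with $\sin(5\pi/12)=(\sqrt6+\sqrt2)/4$ is precisely the intended computation. One caution for your final check~(ii): the constant $\tfrac{5}{12}\cdot\tfrac{7}{12}\cdot\tfrac{19}{12}=\tfrac{665}{1728}$ multiplied against $4\pi/(\sqrt6+\sqrt2)$ gives $\tfrac{665\pi}{432(\sqrt6+\sqrt2)}$ rather than the printed $\tfrac{675\pi}{432(\sqrt6+\sqrt2)}$, so the numerator $675$ in the displayed statement appears to be a misprint for $665$ (the companion constant $(95/144)^m$ does check).
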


With the change of the parameters, Theorem \ref{thm-b} can create
more series expansions for $\pi^m$. The corresponding results will
not be laid out here.

\textbf{Acknowledgments}

The work is supported by the Natural Science Foundations of China
(Nos.11301120, 11201241 and 11201291).


\end{document}